\newtheorem{theorem}{Theorem}
\newtheorem{lemma}[theorem]{Lemma}
\newtheorem{proposition}[theorem]{Proposition}
\newtheorem{corollary}[theorem]{Corollary}
\theoremstyle{definition}
\newtheorem{definition}[theorem]{Definition}
\newtheorem{example}{Example}
\newtheorem{remark}{Remark}
\newcommand{\Z}{{\mathbb{Z}}}
\newcommand{\A}{\mathcal{A}}
\newcommand{\C}{\mathcal{C}}
\newcommand{\D}{\mathcal{D}}
\newcommand{\M}{\mathcal{M}}
\newcommand{\Aut}{{\mathrm{Aut}}}
\newcommand{\Inn}{{\mathrm{Inn}}}
\newcommand{\Out}{{\mathrm{Out}}}
\newcommand{\Mon}{{\mathrm{Mon}}}
\newcommand{\Gal}{{\mathrm{Gal}}}
\newcommand{\normal}{\trianglelefteq}
\newcommand{\la}{\langle}
\newcommand{\ra}{\rangle}
\newcommand{\Sym}{{\mathrm{Sym}}\,}
\newcommand{\Alt}{\mathrm{Alt}}
\newcommand{\lcm}{{\mathrm{lcm}}\,}
\begin{document}

\title{Nilpotent dessins: Decomposition theorem and classification of the abelian dessins}
\author[1,2,3]{Kan Hu\thanks{kanhu@savbb.sk}}
\author[1,2]{Roman Nedela\thanks{nedela@savbb.sk}}
\author[1,3]{Na-Er Wang\thanks{naerwang@savbb.sk}}
\affil[1]{Faculty of Natural Sciences, Matej Bel University, Tajovsk\'eho 40, 974 01, Bansk\'a Bystrica, Slovak Republic}
\affil[2]{Institute of Mathematics and Computer Science, Slovak Academy of Sciences, Bansk\'a Bystrica, Slovak Republic}
\affil[3]{School of Mathematics, Physics and Information Science, Zhejiang Ocean University, Zhoushan, Zhejiang 316000, People's Republic of China}
\maketitle
\begin{abstract}
A map is a 2-cell decomposition of an orientable closed surface.
A dessin is a bipartite map with a fixed colouring of vertices. A dessin is regular if its group of colour- and orientation-preserving automorphisms acts transitively on the edges, and a regular dessin is symmetric if it admits an additional external symmetry transposing the vertex colours. Regular  dessins with nilpotent automorphism groups are investigated. We show that each such dessin is a parallel product of regular dessins whose automorphism groups are the Sylow subgroups. Regular and symmetric dessins with abelian automorphism groups are classified and enumerated. 
\\[2mm]
\noindent{\bf Keywords} nilpotent group, graph embedding, regular map, regular dessin\\
\noindent{\bf MSC(2010)} 20B25, 05C10.
\end{abstract}


\section{Introduction}
A \textit{map} $\mathcal{M}$ is 2-cell decomposition of a closed surface. Maps are often described as 2-cell embeddings of connected graphs into surfaces. Graphs considered in this paper may have multiple edges. A map is \textit{orientable} if its supporting surface is orientable, otherwise, it is \textit{non-orientable}. All maps considered in this paper are  orientable, unless otherwise stated.  A map is {\it bipartite} if its underlying graph is bipartite, that is, the vertices can be coloured in white and black colours such that no vertices of the same colour are adjacent. A \textit{dessin} $\D$ is a bipartite map with a fixed colouring of vertices.

  An (orientation-preserving)  \textit{automorphism} of a map $\M$ is an automorphism of the underlying graph, defined as an incidence-preserving permutation of the darts (arcs), which extends to an orientation-preserving self-homeomorphism of the supporting surface. The set of automorphisms of a map $\M$ forms the automorphism group $\Aut^+(\M)$ of $\M$ under composition. It is well-known that $\Aut^+(\M)$ acts semi-regularly on the darts of $\M$. If this action is transitive, and hence regular, then the map $\M$ itself is called \textit{regular}.
The \textit{automorphism group} $\Aut(\D)$ of a dessin  $\D=\D(\M)$ associated with a bipartite map $\M$ is formed by the automorphisms of $\M$ preserving the colours.  A dessin $\D$ is called \textit{regular}, if $\Aut(\D)$ is regular on the set of edges.

Grothendieck discovered that there
is a close relation between Riemann surfaces and dessins~\cite{Grothendieck1997}.  By the Bely\v{\i}'s Theorem a compact Riemann surface $\mathcal{S}$, regarded as a complex projective algebraic curve, is defined over the field $\mathbb{\bar Q}$ of algebraic numbers is equivalent to the existence of a non-constant meromorphic function on $\mathcal{S}$ with at most three critical values~\cite{Be1, Be2}. The surface is called a Bely\v{\i} surface and the function is called a Bely\v{\i} function. Each Bely\v{\i} function determines in a canonical way a dessin on $\mathcal{S}$, and vice versa~\cite{JS1996}. The absolute Galois group $\Gal(\mathbb{\bar Q}/\mathbb{Q})$  has a natural action on the Bely\v{\i} surfaces, and hence on the dessins. Therefore, dessins present a combinatorial approach to  the absolute Galois group $\Gal(\mathbb{\bar Q}/\mathbb{Q})$ through its action on particular classes of dessins.

Between dessins regular dessins are of particular interest, because the absolute Galois group is faithful on this class of dessins \cite{GZ2013}, and each dessin is a quotient of some regular dessin by a group of automorphisms. One of the most important problems in this field is the classification of regular dessins and the associated algebraic curves. This has been investigated by posing certain conditions on the underlying group of automorphisms, on the underlying graphs, or on the supporting surfaces; see \cite{BJ2001, Conder2012, DJKNS2007, DJKNS2010, Jones2009, Jones2010-2, Jones2010,Jones2013,JNS2008, JNS2007, CJSW2009,Hidalgo2013,Jones2012,JSW2007,JSW2010} for details.

Our main concern here is the first direction. Employing Hall's method given in \cite{Hall1936}, Downs and Jones developed a general approach to enumerate regular objects (including regular dessins) with a given automorphism group~\cite{DJ1987}. The reader is referred to \cite{Hall1936,DJ1987} and \cite{DJ2013} for its applications to the simple groups $PSL_2(p^e)$ and $Sz(2^e)$, and to \cite{Jones2013} for an excellent survey on this topic. In this paper, inspired by some classification results of nilpotent regular maps in~\cite{BDLNS,MNS2012}, we investigate regular dessins with nilpotent automorphism groups. We present a decomposition theorem reducing the classification problem to the classification of regular $p$-dessins -- regular dessins whose automorphism groups are $p$-groups, see Decomposition Theorem~\ref{REDUCTION}.

  The simplest nilpotent regular dessins are the abelian ones.
As noted by Jones and Wolfarth, a nice feature of the abelian dessins
is that they are all quotients of the Fermat dessins.
 The curves related to particular classes of abelian dessins were extensively investigated  in  literature, see for instance \cite{CaSj,GoRu}. They include some important and popular families of curves such as curves of Fermat and Lefschetz type.
Although there are many papers dealing with abelian and cyclic dessins and
with the counterparts in the theory of algebraic curves, it is hard to give a universal reference, since many facts are known between experts as a folklore.

   In this paper, as an application of the Decomposition Theorem~\ref{REDUCTION}, abelian regular dessins are classified and enumerated, see Theorem~\ref{CYCLIC}, \ref{ACLASS}, \ref{ANUMBER}.  Their basic combinatorial properties are determined, which extend the results of Hidalgo in \cite{Hidalgo2013}. It will serve as a basis of induction for the classification of nilpotent regular dessins of higher nilpotency classes, which we want to study as a sequel to this work.

\section{Regular dessins}
A \textit{dessin} $\D$ is an embedding of a connected bipartite graph $\mathcal{B}$ with a fixed (black and white) colouring into an orientable compact surface $\mathcal{S}$ without boundary, dividing $\mathcal{S}$ into simply connected \textit{faces}. Following the global orientation of the supporting surface $\mathcal{S}$, one obtains two permutations $\rho$ and $\lambda$ which successively permute the edges around the black and white vertices. Due to the connectivity, the group generated by $\rho$ and $\lambda$ acts transitively on the edges of $\D$. Conversely, each two-generated transitive permutation group $\la \rho,\lambda\ra\leq \Sym(\Phi)$ where $\Phi$ is a finite nonempty set determines a dessin: We identify the elements of $\Phi$ with the edges, and the cycles of $\rho$ and $\lambda$ with the black and white vertices with the incidence given by containment. In this way we obtain a coloured bipartite graph $\mathcal{B}$. The successive powers of $\rho$ and $\lambda$ give the rotation of edges around each vertex, and these local orientations determine an embedding of $\mathcal{B}$ in an oriented surface.

Therefore, we have an alternative and equivalent way to describe dessins combinatorially. More specifically, a \textit{combinatorial dessin} is a triple $(\Phi;\rho,\lambda)$ consisting of a nonempty finite set $\Phi$ and two permutations $\rho$ and $\lambda$ on $\Phi$ such that the group  $\Mon(\D)=\langle\rho,\lambda\rangle$, called the \textit{monodromy group} of $\D$, is transitive on $\Phi$. An (orientation- and colour-preserving) \textit{homomorphism} (or \textit{covering}) $\D_1\to\D_2$ between two dessins $\D_i=(\Phi_i;\rho_i,\lambda_i)$ $(i=1,2)$  is a mapping $\phi:\Phi_1\to \Phi_2$ such that $\phi\rho_1=\rho_2\phi$ and $\phi\lambda_1=\lambda_2\phi$. A bijective homomorphism between two dessins is called an \textit{isomorphism}, and a self-homomorphism of a dessin $\D$ is called an \textit{automorphism} of $\D$. The set of automorphisms of $\D$ forms the automorphism group $\Aut(\D)$ of $\D$ under composition. It follows that the automorphism group $\Aut(\D)$ is the centraliser of $\Mon(\D)$ in $\Sym(\Phi)$. Due to the transitivity of $\Mon(\D)$, $\Aut(\D)$ acts semi-regularly on $\Phi$. In  case that $\Mon(\D)$ is regular on $\Phi$, or equivalently, $\Aut(\D)$ is  regular on $\Phi$, the dessin $\D$ itself is called \textit{regular}.

A dessin $\D=(\Phi;\rho,\lambda)$ is said to be of \textit{type} $(l,m,n)$ where
\[
l=o(\rho),\quad m=o(\lambda), \quad n=o(\rho\lambda).
\]
Each dessin $\D=(\Phi;\rho,\lambda)$ can be regarded as a transitive permutation representation $\phi:\Delta\to\Sym(\Phi)$ of the triangle group
\[
\Delta=\la X,Y\mid -\ra=\la X,Y,Z\mid XYZ=1\ra
\]
given by the assignment $X\mapsto \rho, Y\mapsto \lambda$. In this way, every dessin $\D$ corresponds to a subgroup $N$ of finite index in $\Delta$ given by the stabilizer of an element of $\Phi$. It is called the \textit{dessin subgroup} associated with $\D$, and it is uniquely determined up to conjugacy. Regular dessins $\D$ with $\Aut(\D)\cong G$ are therefore in one-to-one correspondence with the normal subgroups $N$ of $\Delta$ such that $\Delta/N\cong G$.

An automorphism $\sigma$ of $\Delta$ sends $N$ to $N^{\sigma}$ and hence transforms the associated dessin $\D$ to $\D^{\sigma}$. In particular, if $\sigma\in\Inn(\Delta)$ then $\D\cong\D^{\sigma}$. It follows that the outer automorphism group $\Out(\Delta)=\Aut(\Delta)/\Inn(\Delta)$ of $\Delta$ induces a (faithful) action, called the group $\Omega$ of \textit{dessin operations}, on the isomorphism classes of dessins~\cite{JT1983, James1988, JP2010}.
One nice feature of dessin operations is that they preserve the monodromy group (and hence the automorphism group) of the dessin. The most prominent dessin operations are the \textit{duality operations} and the \textit{generalised Wilson's operation} $H_{i,j}$. The former transform a dessin to a dessin on the same surface by permuting the black vertices, the white vertices and the faces,   and the latter transforms a dessin $\D=(\Phi;\rho,\lambda)$ of type $(l,m,n)$ to a dessin $H_{i,j}(\D)=(\Phi;\rho^i,\lambda^j)$ of type $(l,m,n')$ where $\gcd(i,l)=\gcd(j,m)=1$ and $n'$ is a possible different number from $n$~\cite{JSW2010}. In particular, the operation $H_j=H_{j,j}$ is called the \textit{Wilson's operation}~\cite{Wilson1979}.

In a regular dessin $\D=(\Phi;\rho,\lambda)$, $\Mon(\D)\cong\Aut(\D)$.
In this case we can identify the edges of $\D$ with the elements of $G=\Aut(\D)$. The groups $\Mon(\D)$ and $\Aut(\D)$ are then identified with the left and right regular representations of $G$. The two generators of $G$ can be chosen as the automorphisms $x$ and $y$ which stabilise, respectively,  two adjacent
vertices. In this way, each regular dessin is identified with an \textit{algebraic dessin} -- a triple $(G,x,y)$. It follows that two dessins $(G_i,x_i,y_i)$ $(i=1,2)$ are isomorphic if the assignment $x_1\mapsto x_2, y_1\mapsto y_2$ extends to a group isomorphism from $G_1$ onto $G_2$. In particular, if $G=G_1=G_2$, then two generating pairs $(x_i,y_i)$ $(i=1,2)$ of $G$ give rise to isomorphic dessins with the same underlying group $G$ if and only if $(x_1, y_1)$ is \textit{equivalent} to $(x_2,y_2)$, that is, the assignment $x_1\mapsto x_2, y_1\mapsto y_2$ extends to an automorphism of $G$. Therefore, by the semi-regularity of the action of $\Aut(G)$ on the generating pairs of $G$, we have
\begin{theorem}\label{CORR}
Let $G$ be a two-generated group, then the isomorphism classes of regular dessins $\D$ with $\Aut(\D)\cong G$ are in one-to-one correspondence with the orbits of $\Aut(G)$ on the set $\mathcal{P}(G)$ of generating pairs of $G$. In particular, the number of non-isomorphic regular dessins $\D$ with $\Aut(\D)\cong G$ is equal to $\frac{|\mathcal{P}(G)|}{|\Aut(G)|}.$
\end{theorem}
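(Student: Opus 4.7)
The plan is to unwind the definitions already set up in the excerpt and then apply a short orbit-stabiliser argument; essentially everything needed has been prepared in the paragraph immediately preceding the statement.

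First I would make the bijection between regular dessins (up to isomorphism) with automorphism group isomorphic to $G$ and $\Aut(G)$-orbits on $\mathcal{P}(G)$ explicit. Given a regular dessin $\D=(\Phi;\rho,\lambda)$ with $\Aut(\D)\cong G$, regularity of the $\Aut(\D)$-action on $\Phi$ lets us identify $\Phi$ with $G$ so that $\Aut(\D)$ becomes the right regular representation of $G$ and $\Mon(\D)$ the left regular representation. Under this identification, $\rho$ and $\lambda$ correspond to left multiplication by elements $x,y\in G$ stabilising a pair of adjacent vertices, and transitivity of $\Mon(\D)$ forces $\langle x,y\rangle = G$, i.e.\ $(x,y)\in\mathcal{P}(G)$. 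Conversely, every pair $(x,y)\in\mathcal{P}(G)$ yields a regular dessin $(G,x,y)$ whose automorphism group is $G$. So the assignment $\D\mapsto(G,x,y)$ gives a surjection from regular dessins with $\Aut(\D)\cong G$ onto $\mathcal{P}(G)$, and the only choice involved is which adjacent vertex-pair we used to mark the edge identified with $1\in G$.

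Next I would identify the fibres of this assignment with $\Aut(G)$-orbits. Two algebraic dessins $(G,x_1,y_1)$ and $(G,x_2,y_2)$ are isomorphic precisely when the map $x_1\mapsto x_2$, $y_1\mapsto y_2$ extends to an automorphism of $G$; this is exactly the assertion recalled just before the theorem, and it is a direct consequence of the fact that an isomorphism of dessins must intertwine the two monodromy generators and hence is determined by its effect on a generating pair. Therefore, isomorphism classes of regular dessins with $\Aut(\D)\cong G$ are in bijection with $\Aut(G)$-orbits on $\mathcal{P}(G)$, giving the first assertion.

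For the counting statement I would verify that $\Aut(G)$ acts \emph{freely} (i.e.\ semi-regularly) on $\mathcal{P}(G)$. If $\alpha\in\Aut(G)$ satisfies $\alpha(x)=x$ and $\alpha(y)=y$ for some $(x,y)\in\mathcal{P}(G)$, then $\alpha$ fixes every element of the generating set $\{x,y\}$, so $\alpha$ fixes all of $G=\langle x,y\rangle$ and must be the identity. Consequently every $\Aut(G)$-orbit on $\mathcal{P}(G)$ has size $|\Aut(G)|$, and the number of orbits, hence the number of non-isomorphic regular dessins with automorphism group $G$, is $|\mathcal{P}(G)|/|\Aut(G)|$.

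The only potential subtlety, and the step I would be most careful about, is the forward direction of the isomorphism criterion: that two equivalent generating pairs really do produce isomorphic dessins. This follows by transporting the right-regular representation of $G$ along $\alpha\in\Aut(G)$, which conjugates the actions of $x_1,y_1$ to those of $x_2,y_2$ and hence gives an explicit dessin isomorphism; no real obstacle arises, but it is the one place where the argument uses more than bookkeeping.
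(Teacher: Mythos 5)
Your proposal is correct and follows essentially the same route as the paper: identifying a regular dessin with an algebraic dessin $(G,x,y)$ via the left/right regular representations, using the stated isomorphism criterion for generating pairs, and invoking the semi-regularity (freeness) of the $\Aut(G)$-action on $\mathcal{P}(G)$ for the count. The paper treats all of this as already established in the preceding discussion and states the theorem without a separate proof; you have simply made the same argument explicit.
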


The type, the Euler characteristic $\chi$ and the genus $g$ of an algebraic
dessin can be expressed in group-theoretical terms as follows.

\begin{proposition}
\label{EPformula}
Let $\D=(G, x,y)$ be an algebraic dessin. Then it has type $(o(x),o(y), o(xy))$,
and the genus $g$ and Euler characteristic $\chi$ of $\D$ are determined by the Euler-Poincar\'e formula:
\[
2-2g=\chi=|G|\left(\frac{1}{o(x)}+\frac{1}{o(y)}+\frac{1}{o(xy)}-1\right),
\]
where $o(x)$, $o(y)$ and $o(xy)$ are the orders of the respective elements in $G$.
\end{proposition}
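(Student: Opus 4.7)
The type assertion is essentially a restatement of the definition of the type of a dessin once the identifications $\rho \leftrightarrow x$ (as left multiplication on $G$) and $\lambda \leftrightarrow y$ are made, since the type is defined as the triple of orders $(o(\rho), o(\lambda), o(\rho\lambda))$. So the real content is the Euler–Poincaré formula, and the plan is to count vertices, edges, and faces directly in the regular setting and then invoke the classical formula $\chi = 2 - 2g$ for a closed orientable surface.

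More precisely, I would first recall that in the algebraic description the edge set $\Phi$ is identified with $G$, with $\rho$ and $\lambda$ acting as the left regular representations of $x$ and $y$. Hence $|\Phi| = |G|$, giving $E = |G|$ edges. The black vertices are the cycles of $\rho$ on $\Phi$; since $\langle x \rangle$ acts semiregularly on $G$ with orbits of size $o(x)$, the number of black vertices is $V_\bullet = |G|/o(x)$. Similarly, the white vertices are the cycles of $\lambda$, giving $V_\circ = |G|/o(y)$. For the faces, one uses the standard fact that in an orientable embedding described by $(\rho,\lambda)$ the face boundaries correspond to the cycles of $\rho\lambda$ (or equivalently $(\rho\lambda)^{-1}$, depending on the convention), so the number of faces is $F = |G|/o(xy)$.

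Plugging these into Euler's formula $\chi = V_\bullet + V_\circ - E + F$ gives
\[
\chi \;=\; \frac{|G|}{o(x)} + \frac{|G|}{o(y)} - |G| + \frac{|G|}{o(xy)} \;=\; |G|\!\left(\frac{1}{o(x)}+\frac{1}{o(y)}+\frac{1}{o(xy)} - 1\right),
\]
and $\chi = 2-2g$ then holds because the supporting surface is closed and orientable.

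The only point that requires care, rather than being a tautology, is the justification that the cycles of $\rho\lambda$ count exactly the faces of the embedding; this is the one fact where one must appeal to the combinatorial description of orientable embeddings, rather than to pure group theory. Once that is granted, the remainder is just the arithmetic above. I do not anticipate any genuine obstacle here, since the result is a textbook application of the combinatorial model of dessins established in the preceding section.
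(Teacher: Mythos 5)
Your proof is correct. The paper states Proposition~\ref{EPformula} without proof, treating it as a standard consequence of the combinatorial model of dessins set up in Section~2, and your argument is exactly that standard derivation: edges are the elements of $G$, black and white vertices are the orbits of $\la x\ra$ and $\la y\ra$ acting by left multiplication (each of size $o(x)$, resp.\ $o(y)$, by freeness of the regular action), faces are the cycles of $\rho\lambda$, and Euler's formula does the rest. The one point you single out as needing care --- that faces correspond to cycles of $\rho\lambda$ --- is precisely the convention built into the paper's description of how a transitive pair $(\rho,\lambda)$ determines an embedding, so nothing further is required.
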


Regular dessins possessing additional external symmetries have attracted particular attention. More specifically, let $\D$ be a regular dessin and $N$ the associated dessin subgroup, let $\sigma\in\Aut(\Delta)$. The dessin $\D$ is said to possess an \textit{external symmetry} $\sigma$ if $\D\cong\D^{\sigma}$, that is, $N$ is a $\sigma$-invariant subgroup of $\Delta$. In particular, a regular dessin is \textit{symmetric} if the automorphism $\tau:X\mapsto Y, Y\mapsto X$ of $\Delta$ fixes $\D$, \textit{reflexible} if the automorphism $\iota:X\mapsto X^{-1}, Y\mapsto Y^{-1}$ of $\Delta$ fixes $\D$, and \textit{totally symmetric} if $\D$ is fixed by any automorphism of $\Delta$.
Regular dessins which are not reflexible are called \textit{chiral}.
Topologically speaking, symmetric dessins have a symmetry transposing the vertex colours, and reflexible dessins have a symmetry reversing the orientation of the supporting surfaces.

In the language of algebraic dessins, if a regular dessin $\D=(G,x,y)$ possesses an external symmetry $\sigma\in\Aut(\Delta)$, then the associated (normal) dessin subgroup $N$ of $\Delta$ is $\sigma$-invariant, that is, $\sigma(N)=N$. Hence the assignment $x\mapsto  \sigma(x), y\mapsto \sigma(y)$ extends to an automorphism of $G$ where by abuse of notation we denote by $\sigma$ the automorphism of $G=\Delta/N$ induced by the automorphism $\sigma$ of $\Delta$. In this case, the generating pair $(x,y)$ of $G$ will be said to possess \textit{$\sigma$-property}. Specifically, if the assignment $\tau:x\mapsto y, y\mapsto x$ extends to an automorphism of $G$, then $(x,y)$ will be called a \textit{transpositional} generating pair $(x,y)$ of $G$, and if the assignment $\iota:x\mapsto x^{-1}, y\mapsto y^{-1}$ extends to an automorphism of $G$, then $(x,y)$ will be called an \textit{invertible} generating pair $(x,y)$ of $G$. So symmetric dessins $(G,x,y)$ correspond to transpositional generating pairs $(x,y)$ of $G$, and reflexible dessins $(G,x,y)$ correspond to invertible generating pairs $(x,y)$ of $G$.

Note that in general the underlying graph of a regular dessin $\D=(G,x,y)$ may have multiple edges.  The appearance of multiple edges reflects an unfaithful action of $G=\Aut(\D)$ on the (black and white) vertices, with a nontrivial core $K=\la x\ra\cap \la y\ra$. In this context, the associated (quotient) simple regular dessin $\bar\D=(G/K,xK,yK)$ will be called the \textit{shadow dessin} of $\D$.

In what follows, we investigate the behaviour of regular dessin possessing external symmetries with respect to coverings.  Recall that a normal subgroup $H\normal G$ is called $\sigma$-invariant if $\sigma(H)=H$ where $\sigma\in\Aut(G)$.

\begin{proposition}\label{QUOT}
Let $\D=(G,x,y)$ be a regular dessin possessing an external symmetry $\sigma$. If $H$ is a $\sigma$-invariant subgroup of $G$, then the quotient dessin $\bar\D=(G/H, xH, yH)$ also possesses the external symmetry $\sigma$.
\end{proposition}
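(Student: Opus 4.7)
The plan is to reinterpret the external symmetry in the algebraic dessin language and then transport it through the quotient by $H$. By the discussion preceding the proposition, the statement that the regular dessin $\D=(G,x,y)$ possesses the external symmetry $\sigma\in\Aut(\Delta)$ is equivalent to saying that the assignment $x\mapsto\sigma(x)$, $y\mapsto\sigma(y)$ extends to an automorphism of $G$, which by abuse of notation is still denoted by $\sigma$. I would fix, at the outset, such an automorphism $\sigma\in\Aut(G)$ realising the external symmetry.

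The central step is to use the hypothesis $\sigma(H)=H$ to push $\sigma$ down to the quotient. Since $H$ is a normal $\sigma$-invariant subgroup of $G$, the rule $\bar\sigma\colon gH\mapsto\sigma(g)H$ is a well-defined automorphism of $G/H$; well-definedness and injectivity follow from $\sigma(H)=H$, the homomorphism property is inherited from $\sigma$, and surjectivity follows from the surjectivity of $\sigma$. By construction $\bar\sigma(xH)=\sigma(x)H$ and $\bar\sigma(yH)=\sigma(y)H$, so the generating pair $(xH,yH)$ of $G/H$ has the $\sigma$-property. Since $(xH,yH)$ generates $G/H$ (being the image of the generating pair $(x,y)$ under the canonical surjection $G\to G/H$), the triple $\bar\D=(G/H,xH,yH)$ is a regular algebraic dessin, and the existence of $\bar\sigma$ is precisely the statement that $\bar\D$ possesses the external symmetry $\sigma$.

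The argument is essentially a functoriality statement: an automorphism of $G$ descends to an automorphism of $G/H$ whenever $H$ is invariant under it. There is no serious obstacle; one only needs to be careful to track the two layers of induced automorphisms, namely the automorphism of $\Delta$ and its descent first to $G$ and then to $G/H$. If preferred, the same argument can be run at the level of dessin subgroups in $\Delta$: letting $N\leq\Delta$ be the dessin subgroup of $\D$ and $M\leq\Delta$ the preimage of $H$ under the canonical map $\Delta\to G=\Delta/N$, the $\sigma$-invariance of $N$ in $\Delta$ together with the $\sigma$-invariance of $H=M/N$ in $G$ yields $\sigma(M)=M$, so the dessin associated to $M$, which coincides with $\bar\D$, inherits the external symmetry $\sigma$ directly from the definition.
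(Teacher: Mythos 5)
Your proposal is correct and is essentially the paper's argument: the paper proves the statement by noting that the dessin subgroups satisfy $N\leq M$ with $N$ $\sigma$-invariant and $H\cong M/N$ $\sigma$-invariant, hence $M$ is $\sigma$-invariant --- which is exactly the alternative formulation you give at the end. Your primary formulation (descending the induced automorphism of $G$ to $G/H$ using $\sigma(H)=H$) is the same observation translated into the algebraic-dessin language, and it is sound since the paper's convention makes $\sigma$-invariant subgroups normal by definition.
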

\begin{proof}
Let $N$ and $M$ be the dessin subgroups associated with $\D$ and $\bar\D$. Then $\D$ covers $\bar\D$ with an $\sigma$-invariant subgroup $H\cong M/N$ of covering transformations in $ G\cong\Delta/N$. Since $\D$ possesses the external symmetry $\sigma$, $N$ is $\sigma$-invariant in $\Delta$. It follows that $M$ is $\sigma$-invariant in $\Delta$ as well. Therefore, $\bar\D$ possesses the external symmetry $\sigma$.
\end{proof}

\begin{proposition}\label{TPROPERTY}
Let $\D=(G,x,y)$ be a regular dessin of type $(l,m,n)$, and let $K=\la x\ra\cap\la y\ra$ be of order $k$ where $k\geq 1$. Then the generators $x$ and $y$ satisfy the relation $x^{l/k}=y^{em/k}$ where $e$ is coprime to $k$. In particular, if $\D$ is symmetric, then $l=m$ and $e^2\equiv1\pmod{k}.$
\end{proposition}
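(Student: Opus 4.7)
The plan rests on the observation that both $\la x\ra$ and $\la y\ra$ are cyclic groups, and so each contains a \emph{unique} subgroup of any given order dividing its order. Since $K=\la x\ra\cap \la y\ra$ has order $k$, it must simultaneously be this unique subgroup in each. Concretely, I would first argue that
\[
K=\la x^{l/k}\ra = \la y^{m/k}\ra,
\]
since $x^{l/k}$ has order $k$ in $\la x\ra$ and $y^{m/k}$ has order $k$ in $\la y\ra$, and these cyclic groups admit only one subgroup of each order dividing $l$, $m$ respectively.

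Now $x^{l/k}$ is a generator of the cyclic group $K$ of order $k$, and so is $y^{m/k}$. The generators of a cyclic group of order $k$ are precisely the powers $(y^{m/k})^{e}$ with $\gcd(e,k)=1$. Hence there exists an integer $e$, uniquely determined modulo $k$ and coprime to $k$, such that
\[
x^{l/k}=(y^{m/k})^{e}=y^{em/k},
\]
which is the first assertion.

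For the symmetric case, I would invoke the transpositional property: $\tau\colon x\mapsto y, y\mapsto x$ extends to an automorphism of $G$. Applying $\tau$ forces $o(x)=o(y)$, i.e.\ $l=m$. Applying $\tau$ to the relation $x^{l/k}=y^{el/k}$ just derived yields
\[
y^{l/k}=x^{el/k}=(x^{l/k})^{e}=(y^{el/k})^{e}=y^{e^{2}l/k}.
\]
Since $y^{l/k}$ has order exactly $k$ (it generates $K$), we conclude $y^{(e^{2}-1)l/k}=1$, and so $k\mid e^{2}-1$, i.e.\ $e^{2}\equiv 1\pmod{k}$.

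There is no serious obstacle here; the whole statement follows from the structure of cyclic groups once one notices that $K$ is forced to be the unique order-$k$ subgroup of both $\la x\ra$ and $\la y\ra$. The only step demanding a little care is the mod-$k$ arithmetic in the symmetric case: one must remember that the exponent is now being computed modulo the order $k$ of the common element $y^{l/k}$, not modulo $l$, in order to obtain the clean congruence $e^{2}\equiv 1 \pmod{k}$.
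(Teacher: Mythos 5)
Your proof is correct and follows essentially the same route as the paper: identify $K=\la x^{l/k}\ra=\la y^{m/k}\ra$, write $x^{l/k}=(y^{m/k})^e$ with $\gcd(e,k)=1$, and apply the transposing automorphism $\tau$ twice to get $e^2\equiv 1\pmod{k}$. The only difference is that you spell out the uniqueness of the order-$k$ subgroup of a cyclic group and the final order-$k$ argument, which the paper leaves implicit.
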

\begin{proof}Since $K=\la x^{l/k}\ra=\la y^{m/k}\ra$, we have $x^{l/k}=(y^{m/k})^e$ for some integer $e$ coprime to $k$. In particular, if $\D$ is symmetric, then $l=m$, and the above relation is reduced to $x^{l/k}=y^{el/k}$. Applying the automorphism $\tau$ of $G$ transposing $x$ and $y$, we get $y^{l/k}=x^{el/k}$. It follows that $x^{l/k}=y^{el/k}=x^{e^2l/k}.$ Therefore, $e^2\equiv 1\pmod{k}$, as required.\end{proof}

\begin{corollary}
The shadow dessin of a symmetric dessin is symmetric, and the shadow dessin of a reflexible regular dessin is reflexible regular.\end{corollary}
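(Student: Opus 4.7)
The plan is to deduce both assertions directly from Proposition~\ref{QUOT}, so the only thing that needs verification is that the normal subgroup $K = \la x\ra \cap \la y\ra$ (whose quotient produces the shadow dessin) is invariant under the relevant external symmetry. First I would observe that the shadow dessin $\bar\D = (G/K, xK, yK)$ is automatically a regular algebraic dessin, since $G/K$ is generated by the images of $x$ and $y$; so the claim in each case reduces to showing that $\sigma(K) = K$ for the appropriate $\sigma \in \Aut(\Delta)$ (equivalently, for the induced automorphism of $G$).

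For the symmetric case, suppose $\D$ is symmetric, so that the assignment $\tau : x\mapsto y,\ y\mapsto x$ extends to an automorphism of $G$. Under $\tau$ the cyclic subgroup $\la x\ra$ is mapped onto $\la y\ra$ and vice versa, hence their intersection $K$ is setwise fixed by $\tau$. Proposition~\ref{QUOT} then shows that $\bar\D$ inherits the external symmetry $\tau$, i.e.\ $\bar\D$ is symmetric.

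For the reflexible case, suppose $\D$ is reflexible, so that the assignment $\iota : x\mapsto x^{-1},\ y\mapsto y^{-1}$ extends to an automorphism of $G$. Here $\iota$ preserves each of $\la x\ra$ and $\la y\ra$ individually (as $\la x\ra = \la x^{-1}\ra$ and likewise for $y$), hence also preserves $K$. Applying Proposition~\ref{QUOT} again yields that $\bar\D$ possesses the external symmetry $\iota$, so $\bar\D$ is reflexible; and as noted above it is automatically regular.

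There is no real obstacle here: the entire content is that the intersection of the two canonical cyclic subgroups is carried to itself by any automorphism of $G$ that either swaps them or preserves each of them, and then the statement follows from the invariance argument already packaged in Proposition~\ref{QUOT}.
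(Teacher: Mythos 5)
Your proof is correct and follows essentially the same route as the paper: reduce both claims to showing that $K=\la x\ra\cap\la y\ra$ is invariant under the relevant automorphism of $G$, and then invoke Proposition~\ref{QUOT}. In the symmetric case your justification is in fact a little cleaner than the paper's: you observe directly that the automorphism $\tau$ carries $\la x\ra\cap\la y\ra$ onto $\la y\ra\cap\la x\ra=K$, whereas the paper argues element by element, using that $\la x^i\ra$ and $\la y^i\ra$ are subgroups of the same order in the cyclic group $\la y\ra$ and hence coincide.
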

\begin{proof}Let $\D=(G,x,y)$ and $K=\la x\ra\cap\la y\ra$. Observe that $K$ is a cyclic central subgroup of $G$ generated by some power of $x$, we have $K\normal G$.

Assume that $\D$ is symmetric with $\tau\in\Aut(G)$ transposing $x$ and $y$. If $x^i\in K$, then $\tau(x^i)=y^i\in \la y\ra$. Since $o(x)=o(y)$, the subgroups $\la x^i\ra$ and $\la y^i\ra$ of the cyclic group $\la y\ra$, being of the same order, are identical. It follows that
$y^i\in\la x\ra$. Hence, $\tau(x^i)=y^i\in K$.  Therefore, $K$ is $\tau$-invariant. By Proposition~\ref{QUOT}, the shadow dessin $\D/K$ is also symmetric.

Now assume that $\D$ is reflexible with $\iota\in\Aut(G)$ inverting $x$ and $y$. Since $\la x^i\ra=\la x^{-i}\ra$, $K$ is $\iota$-invariant. By Proposition~\ref{QUOT}, the shadow dessin $\D/K$ is also reflexible.
\end{proof}

In what follows, we study a few examples of regular dessins, with  attention on those with external symmtries.
\begin{example} \label{CUBE} Consider the alternating group  $G=\Alt_4$. It can be easily verified that if $(x,y)$ is a generating pair of $G$, then either one of the generators is a permutation of cycle type $(2,2)$ and the other is of cycle type $(1,3)$, or both are of cycle type $(1,3)$. For instance, the pairs
\[
\begin{array}{ll}
\mathbf{P}_1=\big((12)(34), (123)\big), &\mathbf{P}_2=\big((123), (12)(34)\big),\\
\mathbf{P}_3=\big((123), (124)\big), &\mathbf{P}_4=\big((132), (124)\big)
\end{array}
\]
all generate $G$, and they are pairwise non-equivalent. More specifically, $G$ has precisely $96$ distinct generating pairs. Since $\Aut(G)\cong \mathrm{Sym}_4$, by Theorem~\ref{CORR},  there are precisely $96/24=4$ non-isomorphic dessins $\D_i$ $(i=1,2,3,4)$ given by the generating pairs $\mathbf{P}_i$ above. The dessins $\D_1$ and $\D_2$ are reflexible but asymmetric. $\D_1$ can be viewed as the subdivided tetrahedral map on the sphere, with its vertices coloured by black and the added vertices at the edge centres coloured by white. $\D_2$ is the dual of $\D_1$ by swapping the vertex colours. On the other hand, the dessins $\D_3$ and $\D_4$ are symmetric and reflexible. They correspond to the regular embeddings of the cube into the sphere and into the torus, respectively.
\end{example}

\begin{example}Let $G=\la g,h\ra$ be a metacyclic $2$-group of order $64$ defined by the presentation
\begin{align}\label{META64}
\la g,h\mid g^8=h^8=1,h^g=h^5\ra.
\end{align}
Since $\la g\ra\cap\la h\ra=1$, each element of $G$ can be uniquely written as form $g^ih^j$ where $0\leq i,j\leq 7$. Let $x=g^ih^j$ and $y=g^kh^l$. By the Burnside's Basis Theorem~\cite[Theorem~3.15]{Huppert1967}, $G=\la x,y\ra$ if and only if the matrix
$A(i,j,k,l)=\begin{pmatrix}
i&j\\k&l
\end{pmatrix}
$
is invertible in $\Z_2$, that is,
\begin{align}\label{BURN}
il-jk\not\equiv0\pmod{2}.\end{align} Note that each invertible matrix $A(i,j,k,l)$ where $i,j,k,l\in\Z_2$ lifts to $4^4$ invertible matrices $A(i,j,k,l)$ such that $i,j,k,l\in\Z_8$. Since $|GL(2,2)|=6$, the group $G$ has precisely $6\cdot 4^4=3\cdot 2^9$ generating pairs. On the other hand, for a generating pair $(x,y)$ of $G$ given above, the assignment $g\mapsto x, h\mapsto y$ extends to an automorphism of $G$ if and only if
\[
o(x)=o(y)=8\quad\text{and}\quad y^x=y^5.
\]
One can easily deduce from the presentation \eqref{META64} that $[h,g]=h^4\in Z(G)$ and $[h,g]^2=1$. By \eqref{BURN}, we have $x^4=g^{4i}h^{4j}[h,g]^{ij{4\choose 2}}=g^{4i}h^{4j}\neq1$ and $x^8=1$. Hence, $o(x)=8$. Similarly, $o(y)=8$. Moreover, we have
\begin{align*}
y^x=&(g^kh^l)^{g^ih^j}=h^{-j}g^kh^{j+ 5^i\cdot l}=g^kh^{(1-5^k)j+ 5^i\cdot l},\\
y^5=&(g^kh^l)^5=g^{5k}h^{5l}[h,g]^{10lk}=g^{5k}h^{5l}.
\end{align*}
It follows that $y^x=y^5$ if and only if $5k\equiv k\pmod{8}$ and $(1-5^k)j+ 5^i\cdot l\equiv 5l\pmod{8}$. This is equivalent to that $i$ and $l$ are both odd, and $k$ is even. Therefore $|\Aut(G)|=2^9$. Consequently, by Theorem~\ref{CORR}, there are precisely 3 non-isomorphic regular dessins $\D_i$ $(i=1,2,3)$ with $\Aut(\D_i)\cong G$, corresponding to three non-equivalent generating pairs $(g,h)$, $(h,g)$ and $(g,gh)$ of $G$. The first two dessins are simple, reflexible but not symmetric, each being the dual of the other by swapping the vertex colours. They correspond to the edge-transitive (irregular) embeddings of $K_{8,8}$. The third dessin is simple, reflexible and symmetric, corresponding to the regular embedding of $K_{8,8}$~\cite{DJKNS2007}.
\end{example}

\begin{example}\label{QUAT}
The quaternion group $Q_8$ contains six elements of order $4$, and a central involution. For each element $x$ of order $4$, there are precisely $4$ elements $y$ which can be paired with $x$ such that $Q_8=\la x,y\ra$. Therefore, $Q_8$ has precisely $6\times 4=24$ generating pairs. Since $\Aut(Q_8)\cong\mathrm{Sym}(4)$, by Theorem~\ref{CORR}, there is only one regular dessin $\D$ such that $\Aut(\D)\cong Q_8$. It is totally symmetric of type $(4,4,4)$ and genus $2$, corresponding to the $8$-gonal regular embedding of $K_{2,2}^{(2)}$ into the double torus.
\end{example}

\begin{example}\label{P3}
Let $p>2$ be a prime, and let $G$ be the (unique) non-abelian non-metacyclic $p$-group of order $p^3$ given by the presentation
\begin{align}\label{PCUBE}
G=\la x,y\mid x^p=y^p=z^p=[z,x]=[z,y]=1,z:=[x,y]\ra.
\end{align}
Clearly, $\Phi(G)=G'=\la z\ra$ and ${\rm exp}(G)=p$. Since $[x,z]=[y,z]=1$, each element of $G$ can be written as form $x^iy^jz^k$ where $i,j,k\in\Z_p$. Let $x_1=x^iy^jz^k$ and $y_1=x^ry^sz^t$, then by the Burnside Basis Theorem~\cite[Theorem~3.15]{Huppert1967}, $G=\la x_1,y_1\ra$ if and only if $G/\Phi(G)=\la \bar x^i\bar y^j, \bar x^r\bar y^s\ra$. Since $G/\Phi(G)\cong\Z_p\oplus\Z_p$, this is equivalent to that the matrix
$\begin{pmatrix}
i&j\\r&s
\end{pmatrix}
$ is invertible in $\Z_p$, that is, $is-jr\not\equiv0\pmod{p}$. Therefore, $G$ has precisely $p^3(p-1)^2(p+1)$ generating pairs. Moreover, it can be easily verified that for each such generating pair $(x_1,y_1)$ of $G$, the assignment $x\mapsto x_1, y\mapsto y_1$ is an automorphism of $G$. Hence, $|\Aut(G)|=p^3(p-1)^2(p+1)$ as well. By Theorem~\ref{CORR}, there is a unique regular dessin $\D$ such that $\Aut(\D)\cong G$, each being totally symmetric of type $(p,p,p)$. For instance, if $p=3$, then the  dessin corresponds to the regular embedding of the Pappus graph -- a cubic bipartite graph with 18 vertices and 27 edges determined as the Levi graph of the Pappus configuration \cite{Coxeter1950} -- into the torus, see Figure~\ref{PAPPUS}.
\end{example}

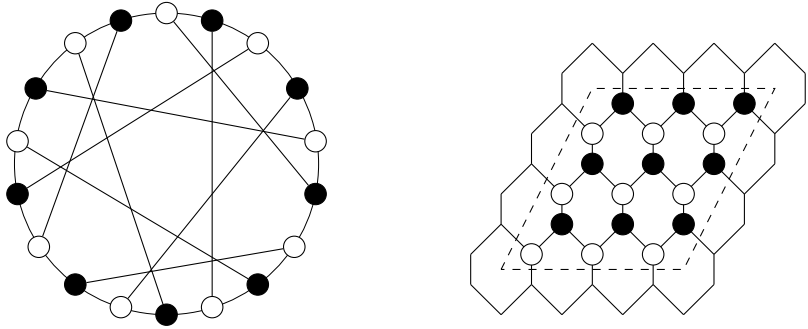
\begin{figure}[h!]
\begin{center}
\begin{tikzpicture}[scale=0.2, inner sep=1mm]

\draw [very thin] (-20,10) circle (10);
\node (a) at (-20,0)[shape=circle, draw, fill=black] {};
\node (a1) at (-17,0.5)[shape=circle, draw, fill=white] {};
\node (a2) at (-14,2)[shape=circle, draw, fill=black] {};
\node (a3) at (-11.6,4.5)[shape=circle, draw, fill=white] {};
\node (a4) at (-10.2,8)[shape=circle, draw, fill=black] {};
\node (a5) at (-10.2,11.5)[shape=circle, draw, fill=white] {};
\node (a6) at (-11.4,15)[shape=circle, draw, fill=black] {};
\node (a7) at (-14,18)[shape=circle, draw, fill=white] {};
\node (a8) at (-17,19.5)[shape=circle, draw, fill=black] {};
\node (b) at (-20,20)[shape=circle, draw, fill=white] {};
\node (b1) at (-23,0.5)[shape=circle, draw, fill=white] {};
\node (b2) at (-26,2)[shape=circle, draw, fill=black] {};
\node (b3) at (-28.4,4.5)[shape=circle, draw, fill=white] {};
\node (b4) at (-29.8,8)[shape=circle, draw, fill=black] {};
\node (b5) at (-29.8,11.5)[shape=circle, draw, fill=white] {};
\node (b6) at (-28.6,15)[shape=circle, draw, fill=black] {};
\node (b7) at (-26,18)[shape=circle, draw, fill=white] {};
\node (b8) at (-23,19.5)[shape=circle, draw, fill=black] {};
\draw (a) to (b7);
\draw (a4) to (b);
\draw (a1) to (a8);
\draw (a2) to (b5);
\draw (a3) to (b2);
\draw (a5) to (b6);
\draw (a6) to (b1);
\draw (a7) to (b4);
\draw (b3) to (b8);

\draw (0,2)--(2,0);
\draw (4,2)--(6,0);
\draw (8,2)--(10,0);
\draw (12,2)--(14,0);

\draw (4,2)--(2,0);
\draw (8,2)--(6,0);
\draw (12,2)--(10,0);
\draw (16,2)--(14,0);

\draw (0,2)--(0,4);
\draw (4,2)--(4,4);
\draw (8,2)--(8,4);
\draw (12,2)--(12,4);
\draw (16,2)--(16,4);

\draw (0,4)--(2,6);
\draw (4,4)--(6,6);
\draw (8,4)--(10,6);
\draw (12,4)--(14,6);
\draw (16,4)--(18,6);

\draw (2,6)--(4,4);
\draw (6,6)--(8,4);
\draw (10,6)--(12,4);
\draw (14,6)--(16,4);

\draw (2,6)--(2,8);
\draw (6,6)--(6,8);
\draw (10,6)--(10,8);
\draw (14,6)--(14,8);
\draw (18,6)--(18,8);

\draw (2,8)--(4,10);
\draw (6,8)--(8,10);
\draw (10,8)--(12,10);
\draw (14,8)--(16,10);
\draw (18,8)--(20,10);

\draw (6,8)--(4,10);
\draw (10,8)--(8,10);
\draw (14,8)--(12,10);
\draw (18,8)--(16,10);

\draw (4,10)--(4,12);
\draw (8,10)--(8,12);
\draw (12,10)--(12,12);
\draw (16,10)--(16,12);
\draw (20,10)--(20,12);

\draw (4,12)--(6,14);
\draw (8,12)--(10,14);
\draw (12,12)--(14,14);
\draw (16,12)--(18,14);
\draw (20,12)--(22,14);

\draw (8,12)--(6,14);
\draw (12,12)--(10,14);
\draw (16,12)--(14,14);
\draw (20,12)--(18,14);

\draw (6,14)--(6,16);
\draw (10,14)--(10,16);
\draw (14,14)--(14,16);
\draw (18,14)--(18,16);
\draw (22,14)--(22,16);

\draw (6,16)--(8,18);
\draw (10,16)--(12,18);
\draw (14,16)--(16,18);
\draw (18,16)--(20,18);

\draw (10,16)--(8,18);
\draw (14,16)--(12,18);
\draw (18,16)--(16,18);
\draw (22,16)--(20,18);

\node (A1) at (4,4)[shape=circle, draw, fill=white] {};
\node (A2) at (8,4)[shape=circle, draw, fill=white] {};
\node (A3) at (12,4)[shape=circle, draw, fill=white] {};

\node (B1) at (6,6)[shape=circle, draw, fill=black] {};
\node (B2) at (10,6)[shape=circle, draw, fill=black] {};
\node (B3) at (14,6)[shape=circle, draw, fill=black] {};

\node (C1) at (6,8)[shape=circle, draw, fill=white] {};
\node (C2) at (10,8)[shape=circle, draw, fill=white] {};
\node (C3) at (14,8)[shape=circle, draw, fill=white] {};

\node (D1) at (8,10)[shape=circle, draw, fill=black] {};
\node (D2) at (12,10)[shape=circle, draw, fill=black] {};
\node (D3) at (16,10)[shape=circle, draw, fill=black] {};

\node (E1) at (8,12)[shape=circle, draw, fill=white] {};
\node (E2) at (12,12)[shape=circle, draw, fill=white] {};
\node (E3) at (16,12)[shape=circle, draw, fill=white] {};

\node (F1) at (10,14)[shape=circle, draw, fill=black] {};
\node (F2) at (14,14)[shape=circle, draw, fill=black] {};
\node (F3) at (18,14)[shape=circle, draw, fill=black] {};

\draw [dashed](2,3)--(8,15)--(20,15)--(14,3)--(2,3);
 \end{tikzpicture}

\end{center}
\caption{The Pappus graph and its embedding into torus.}
\label{PAPPUS}
\end{figure}

\begin{remark}
The idea of transpositional generating pairs appeared in \cite{JNS2007}, where the authors investigated a particular case called isobicyclic generating pairs. More specifically, a transpositional generating pair $(x,y)$ of a group $G$ is \textit{isobicyclic} if $G=\la x\ra\la y\ra$ and $\la x\ra\cap \la y\ra=1$. It was fundamental to observe that the isomorphism classes of regular embeddings $\M$ of $K_{n,n}$ with $\Aut_0^+(\M)\cong G$ are in one-to-one correspondence with orbits of isobicyclic generating pairs of $G$ under the action of $\Aut(G)$. Employing this correspondence, a complete classification of regular embeddings of  complete bipartite graphs was obtained in a series of papers \cite{NSZ2002, JNS2007, JNS2008, DJKNS2007, DJKNS2010, Jones2010}. The idea of isobicyclic generating pairs was also applied by Zhang and Du in the classification of regular embeddings of complete multipartite graphs~\cite{ZD2012,ZD2014}. Moreover, totally symmetric dessins appeared in \cite{Jones2013} as \textit{$\Omega$-invariant} regular dessins where $\Omega$ denotes the group of dessin operations.
\end{remark}

\section{Decomposition theorem}

It is well-known that a nilpotent group decomposes into a direct product
of its Sylow subgroups. In this section we show that an analogous statement
holds for nilpotent dessins. Before proceeding further we first summarise some prerequisites on nilpotent groups, for details the reader is referred to \cite[Chapter III]{Huppert1967}.

Let $G$ be a finite group. The \textit{upper central series} of $G$ is defined to be a series \[1=Z_0(G)\leq Z_1(G)\leq\cdots\leq Z_i(G)\leq\cdots  \] where
$Z_1(G)=Z(G)$ and $Z_{i+1}(G)/Z_i(G)=Z(G/Z_i)$, $i\geq1$. The \textit{lower central series} of $G$ is defined to be a series \[G=G_1\geq G_2\geq G_3\geq\cdots \] where $G_{i+1}=[G_i,G]$, $i\geq1$. A group $G$ is \textit{nilpotent} if the upper central series of $G$ contains $G$. If, in addition, the length of the upper central series of $G$ is $c$, then $G$ is said of (nilpotency) \textit{class} $c$; we write $c=c(G)$.

\begin{proposition}\cite[Chapter III]{Huppert1967}\label{NILP} Let $G$ be a finite group. The following statements hold true.
\begin{enumerate}
\item[\rm(i)]$G$ is nilpotent if and only if $G$ is the direct product of its Sylow subgroups.
\item[\rm(ii)]If $G$ is nilpotent, then its upper and lower central series have equal length.
\item[\rm(iii)]If $G$ is nilpotent of class $c$, then every subgroup of $G$ is nilpotent of class at most $c$.
\item[\rm(iv)] If $G=\la M\ra$ where $M\subseteq G$, then \[G_i=\la [x_1,x_2,\ldots, x_i]^g\mid x_i\in M, g\in G\ra=\la [x_1,x_2,\ldots, x_i], G_{i+1}\mid x_i\in M\ra.\]
\end{enumerate}
\end{proposition}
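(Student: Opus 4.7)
The plan is to handle the four parts in sequence, with (i) requiring the most machinery and (ii)--(iv) being inductive consequences of the basic definitions.

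For (i), the engine is the \emph{normalizer condition}: in a nilpotent group every proper subgroup is properly contained in its normalizer. I would prove this by analysing the upper central series: if $H < G$ is proper, pick the smallest $i$ with $Z_i(G) \not\le H$; any $z \in Z_i(G) \setminus H$ satisfies $[z, H] \le Z_{i-1}(G) \le H$, so $z$ normalises $H$ but lies outside $H$. Given this, for any Sylow $p$-subgroup $P$ of $G$ and $N = N_G(P)$, the classical Frattini-type argument shows $N_G(N) = N$; the normaliser condition then forces $N = G$ and $P \normal G$. With every Sylow subgroup normal, subgroups of coprime order commute elementwise, and $G$ is the internal direct product of its Sylow subgroups. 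The converse combines the class-equation argument that a finite $p$-group has non-trivial centre (and so is nilpotent by induction on the order) with the elementary fact that a direct product of nilpotent groups is nilpotent.

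For (ii), I would run two dual inductions. Let $c$ and $d$ denote the lengths of the upper and lower central series respectively. Reverse induction on $i$ gives $G_i \le Z_{c-i+1}(G)$, since $G_{i+1} = [G_i, G] \le [Z_{c-i+1}(G), G] \le Z_{c-i}(G)$; evaluating at $i = c+1$ yields $G_{c+1} = 1$, whence $d \le c$. Symmetrically, induction on $i$ gives $G_{d-i+1} \le Z_i(G)$: the inductive step follows because $[G_{d-i}, G] = G_{d-i+1} \le Z_i(G)$ means that $G_{d-i}/Z_i(G)$ is central in $G/Z_i(G)$, hence $G_{d-i} \le Z_{i+1}(G)$. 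Taking $i = d-1$ gives $G = G_1 \le Z_d(G)$, so $c \le d$. Part (iii) then drops out: if $H \le G$, an easy induction shows $H_i \le G_i$ for every $i$ using $[H_i, H] \le [G_i, G] = G_{i+1}$, so $H_{c+1} \le G_{c+1} = 1$.

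Part (iv) is the most delicate, and I expect the book-keeping there to be the main obstacle. The first equality follows from the textbook fact that $G_i$ is generated by all $i$-fold simple commutators of elements of $G$, together with the commutator identities
\[
[ab, c] = [a,c]^{b}[b,c], \qquad [a, bc] = [a,c]\,[a,b]^{c},
\]
applied repeatedly to reduce an arbitrary $[g_1, \dots, g_i]$ with $g_j \in G = \la M\ra$ to a product of conjugates of $i$-fold commutators in elements of $M$; the reduction requires a simultaneous induction on $i$ and on the word length of each entry in $M$. The second equality is cleaner: since $[[x_1, \dots, x_i], g] \in [G_i, G] = G_{i+1}$, every conjugate $[x_1, \dots, x_i]^g$ differs from $[x_1, \dots, x_i]$ by an element of $G_{i+1}$, so the generating set consisting of the simple commutators on elements of $M$ together with $G_{i+1}$ already captures every such conjugate, matching the first expression modulo $G_{i+1}$.
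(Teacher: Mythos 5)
The paper gives no proof of this proposition at all: it is quoted directly from Huppert, Chapter III, so there is no in-paper argument to compare yours against. Your proof is the standard textbook one (essentially the proof in that reference) and is correct in substance: the normalizer condition together with the Frattini-type argument $N_G(N_G(P))=N_G(P)$ for (i), the two dual inductions $G_i\le Z_{c-i+1}(G)$ and $G_{d-i+1}\le Z_i(G)$ for (ii), the containment $H_i\le G_i$ for (iii), and commutator expansion for (iv). Two small remarks. First, in (ii) the final evaluation should be at $i=d$, which gives $G_1\le Z_d(G)$; taking $i=d-1$ as written only yields $G_2\le Z_{d-1}(G)$ --- a harmless off-by-one. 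Second, in (iv) the first equality is where all the content lies: you correctly identify the expansion identities and the need for a simultaneous induction on $i$ and on word length, but you do not carry that induction out (in particular, one must handle terms of the form $[[x_1,\dots,x_i]^h,y]$ and absorb the error terms into $G_{i+1}$, arguing modulo $G_{i+2}$). For a cited standard fact this is an acceptable sketch, but it is the one step of your write-up that is not yet a complete proof.
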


In the theory of regular dessins, the operation corresponding to direct product is the parallel product introduced by S. Wilson in \cite{Wilson1994}.  We first introduce it and then we show that each regular dessin with a nilpotent automorphism group can be decomposed into a parallel product of regular dessins whose automorphism groups are $p$-groups.
\begin{definition}\label{PARA}
Let $\D_i$ $(i=1,2)$ be regular dessins and $N_i$ the associated dessin subgroups. The \textit{parallel product} (or \textit{join}) $\D_1\Join\D_2$ of $\D_1$ and $\D_2$ is the regular dessin corresponding to the subgroup $N_1\cap N_2$ of $\Delta$.
\end{definition}
In the language of algebraic dessins, the parallel product of regular dessins can be translated as follows.
\begin{proposition}\cite{BN2001}
Let $\D_i=(G_i,x_i,y_i)$ $(i=1,2)$ be regular dessins. Then the parallel product $\D_1\Join\D_2$ of $\D_1$ and $\D_2$ is the regular dessin $(G,x,y)$ where $x=(x_1,x_2),$ $y=(y_1,y_2)$ and $G=\la x,y\ra$ is the subgroup generated by $x$ and $y$ in the direct product $G_1\times G_2$ of the groups $G_1$ and $G_2$.
\end{proposition}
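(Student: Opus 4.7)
The plan is to use the correspondence, emphasised earlier in the excerpt, between regular dessins and normal subgroups of the triangle group $\Delta=\langle X,Y\rangle$. Let $N_1,N_2\normal\Delta$ be the dessin subgroups associated with $\D_1$ and $\D_2$, so that the quotient maps
\[
\pi_i\colon \Delta\longrightarrow \Delta/N_i\cong G_i,\qquad X\mapsto x_i,\ Y\mapsto y_i,
\]
witness $\D_i=(G_i,x_i,y_i)$ as algebraic dessins. By Definition~\ref{PARA} the parallel product $\D_1\Join\D_2$ corresponds to the normal subgroup $N_1\cap N_2$ of $\Delta$; so I only need to identify the algebraic dessin associated with the quotient $\Delta/(N_1\cap N_2)$.

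First I would build the diagonal homomorphism
\[
\phi\colon \Delta\longrightarrow G_1\times G_2,\qquad X\longmapsto (x_1,x_2),\ Y\longmapsto (y_1,y_2),
\]
which is well defined because $\Delta$ is the free group on $X,Y$. Since the composition with each projection $G_1\times G_2\to G_i$ equals $\pi_i$, an element $w\in\Delta$ lies in $\ker\phi$ precisely when it lies in both $\ker\pi_1=N_1$ and $\ker\pi_2=N_2$; hence $\ker\phi=N_1\cap N_2$. The image $\phi(\Delta)$ is, by construction, the subgroup of $G_1\times G_2$ generated by $x=(x_1,x_2)$ and $y=(y_1,y_2)$, i.e.\ exactly the group $G=\langle x,y\rangle$ in the statement.

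Applying the first isomorphism theorem then yields an isomorphism $\bar\phi\colon \Delta/(N_1\cap N_2)\xrightarrow{\sim} G$ which sends the images of $X$ and $Y$ to $x$ and $y$ respectively. This is exactly what is needed to identify $(G,x,y)$ as the algebraic dessin corresponding to the normal subgroup $N_1\cap N_2$; regularity is automatic from $G=\langle x,y\rangle$ together with the transitive action on itself by right multiplication. Combining this with Definition~\ref{PARA} gives $\D_1\Join\D_2=(G,x,y)$ as claimed.

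There is no real obstacle here; the whole argument is a clean application of Goursat-type diagonal embedding. The only small point that merits care is to verify that $\phi$ is well-defined on $\Delta$, which follows because $\Delta$ is free on $X,Y$ (there are no defining relations to check); once that is observed, the identification $\ker\phi=N_1\cap N_2$ and the description of the image are immediate.
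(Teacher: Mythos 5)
Your argument is correct: since $\Delta=\la X,Y\mid-\ra$ is free, the diagonal homomorphism $\phi$ is well defined, its kernel is $\ker\pi_1\cap\ker\pi_2=N_1\cap N_2$, its image is $\la x,y\ra$, and the first isomorphism theorem identifies the regular dessin of $N_1\cap N_2$ with $(G,x,y)$. The paper itself gives no proof, citing \cite{BN2001}, and your argument is exactly the standard justification implicit in the subgroup-lattice framework set up in Section~2, so there is nothing to fault.
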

The group $G$, being generating by $(x_1,x_2)$ and $(y_1,y_2)$, is  a subgroup of $G_1\times G_2$, possibly a proper one. It is proved that if $G_i$ $(i=1,2)$ are non-abelian simple subgroups, then $G=G_1\times G_2$~\cite[Corollary 6.2]{Jones2013}. The following proposition gives another sufficient condition for this.
\begin{proposition}\label{COPR}
Let $\D_i=(G_i,x_i,y_i)$ $(i=1,2)$ be regular dessins of types $(l_i,m_i,n_i)$, and let $\D=(G,x,y)$ be the parallel product of $\D_1$ and $\D_2$ where $x=(x_1,x_2),$ $y=(y_1,y_2)$ and $G=\la x,y\ra$. Let    any two of the following  conditions be satisfied:
\[\gcd(l_1,l_2)=1,\quad \gcd(m_1,m_2)=1,\quad \gcd(n_1,n_2)=1.\]
Then $G=G_1\times G_2$.
\end{proposition}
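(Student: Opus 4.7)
The plan is to prove $G = G_1 \times G_2$ by showing that $G$ contains both of the natural ``coordinate'' subgroups $G_1 \times \{1\}$ and $\{1\} \times G_2$. Let $\pi_i \colon G_1 \times G_2 \to G_i$ denote the projections. Since $\pi_i((x_1,x_2)) = x_i$ and $\pi_i((y_1,y_2)) = y_i$, and since $G_i = \la x_i, y_i \ra$, each projection restricted to $G$ is surjective. Thus it is enough to establish that $(1, g) \in G$ for every $g \in G_2$ and $(g', 1) \in G$ for every $g' \in G_1$.

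The key observation is an \emph{extraction trick}: if $g = (g_1, g_2) \in G$ with $o(g_1) = a$ and $o(g_2) = b$ and $\gcd(a,b) = 1$, then $g^a = (1, g_2^a)$, and because raising to the $a$-th power is a bijection on the cyclic group $\la g_2 \ra$ when $\gcd(a,b)=1$, the element $g_2^a$ generates $\la g_2 \ra$; hence some further power of $g^a$ yields $(1, g_2) \in G$. Symmetrically, a power of $g^b$ yields $(g_1, 1) \in G$.

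Apply this to each of the three cases. Suppose first that $\gcd(l_1,l_2)=1$ and $\gcd(m_1,m_2)=1$. Applying the extraction trick to $x = (x_1,x_2)$ produces $(x_1,1), (1,x_2) \in G$, and applying it to $y = (y_1,y_2)$ produces $(y_1,1), (1,y_2) \in G$. Since $G_1 = \la x_1, y_1\ra$ and $G_2 = \la x_2, y_2 \ra$, both coordinate subgroups lie in $G$ and $G = G_1 \times G_2$. For the cases involving $\gcd(n_1,n_2)=1$, replace one of $x$ or $y$ by $xy = (x_1 y_1, x_2 y_2)$ and run the same extraction. For example, if $\gcd(l_1,l_2)=1$ and $\gcd(n_1,n_2)=1$, extract $(x_1,1), (1,x_2)$ from powers of $x$ and $(x_1 y_1, 1), (1, x_2 y_2)$ from powers of $xy$; multiplying by the inverses of the former recovers $(y_1,1)$ and $(1,y_2)$, and we conclude as before. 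The third case is completely analogous.

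There is no real conceptual obstacle here—the argument reduces to the single extraction trick applied to the three generators $x$, $y$, $xy$ whose orders in each factor are $l_i$, $m_i$, $n_i$. The only care required is bookkeeping across the three symmetric sub-cases to ensure that the two given coprimality hypotheses always suffice to isolate enough coordinate-wise generators of both $G_1$ and $G_2$ inside $G$.
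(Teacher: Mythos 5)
Your proof is correct and follows essentially the same route as the paper: the ``extraction trick'' of taking the $l_1$-th and $l_2$-th powers of $(x_1,x_2)$ to isolate $(1,x_2)$ and $(x_1,1)$ is precisely the paper's argument, and your handling of the cases involving $\gcd(n_1,n_2)=1$ via $xy$ matches the paper's observation that $G_i=\la x_i,z_i\ra=\la y_i,z_i\ra$ for $z_i=x_iy_i$. No gaps.
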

\begin{proof}Assume $\gcd(l_1,l_2)=1=\gcd(m_1,m_2)$. Since $G_1=\la x_1,y_1\ra$ and $G_2=\la x_2,y_2\ra$, we have
\[G_1\times G_2=\la (x_1,1),(y_1,1),(1,x_2),(1,y_2)\ra.\]
Clearly, $G=\la (x_1,x_2),(y_1,y_2)\ra\leq G_1\times G_2$. To see the opposite inclusion, it is sufficient to show that the generators of $G_1\times G_2$ are all contained in $G$. Since $o(x_1)=l_1$, $o(x_2)=l_2$ and $\gcd(l_1,l_2)=1$, we have
\[(1,x_2)\in\la (1,x_2^{l_1})\ra=\la(x_1,x_2)^{l_1} \ra\leq G\]
and
\[
 (x_1,1)\in \la (x_1^{l_2},1)\ra=\la (x_1,x_2)^{l_2}\ra\leq G.\]
Similarly, $(1,y_2)\in G$ and $(y_1,1)\in G$. Therefore, $G=G_1\times G_2$.

Set $z_i=x_iy_i$. Then $G_i=\la x_i,y_i\ra=\la x_i,z_i\ra=\la y_i,z_i\ra$. By symmetry, the other cases can be proved in a similar way.
\end{proof}

The following example shows that the condition in Proposition~\ref{COPR} is sufficient but not necessary.
\begin{example}
It is evident that the two reflexible symmetric dessins $\D_3$ and $\D_4$ in Example~\ref{CUBE}, being of type $(3,3,2)$ and $(3,3,3)$,  do not satisfy the  relative coprimeness conditions in Proposition~\ref{COPR}. It can be verified (by MAGMA, for instance) that the parallel product $\D=\D_3\Join\D_4$ is a reflexible symmetric dessin of type $(3,3,6)$ and genus $13$ with $|\Aut(\D)|=144$. It is the Conder's hypermap RPH13.2~\cite{Conder2012}. Since $\Aut(\D_3)\cong\Aut(\D_4)\cong\Alt_4$, $|\Aut(\D_3)\times \Aut(\D_4)|=144$. Therefore, $\Aut(\D)=\Aut(\D_3)\times \Aut(\D_4).$
\end{example}

\begin{proposition}\label{SRS}
Let $\D_i=(G_i,x_i,y_i)$ $(i=1,2)$ be regular dessins, and let $\D=\D_1\Join \D_2$. If $\gcd(|G_1|,|G_2|)=1$, then $\D$ possesses an external symmetry $\sigma$ if and only if so are $\D_1$ and $\D_2$.\end{proposition}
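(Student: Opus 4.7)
The plan is to reduce the assertion to the following two algebraic facts: first, that the coprimeness hypothesis forces $G$ to be the full direct product $G_1\times G_2$, and second, that every automorphism of a direct product of two groups of coprime order decomposes as a product of automorphisms of the factors.

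\textbf{Step 1 (Recognise $G$).} Because $l_i,m_i,n_i$ divide $|G_i|$ and $\gcd(|G_1|,|G_2|)=1$, all three coprimeness conditions in Proposition~\ref{COPR} are fulfilled, so $G=G_1\times G_2$. In particular $(x_1,x_2)$ and $(y_1,y_2)$ are the distinguished generators of $\D$.

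\textbf{Step 2 (Automorphisms split).} I would next prove the key lemma that each automorphism $\Sigma\in\Aut(G_1\times G_2)$ has the form $\Sigma=\sigma_1\times\sigma_2$ with $\sigma_i\in\Aut(G_i)$. This follows from the observation that $G_i\times\{1\}$ is precisely the set of elements of $G$ whose order divides $|G_i|$ (because no non-trivial element of $G_{3-i}$ can have such an order, by coprimeness); hence both direct factors are characteristic in $G$, and on an internal direct product decomposition any automorphism preserving both factors acts coordinate-wise.

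\textbf{Step 3 (Compatibility of the $\sigma$-action).} A short but essential remark: if $\sigma\in\Aut(\Delta)$ sends $X$ to a word $w(X,Y)\in\Delta$, then for any algebraic dessin $(H,u,v)$ the element $\sigma(u):=w(u,v)$ is obtained by evaluating the same word. Since word evaluation commutes with projections, we get in $G=G_1\times G_2$ the identity $\sigma\bigl((x_1,x_2)\bigr)=(\sigma(x_1),\sigma(x_2))$, and likewise for $y$.

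\textbf{Step 4 (Two directions).} For the forward implication, if each $\D_i$ admits $\sigma$, pick $\sigma_i\in\Aut(G_i)$ with $\sigma_i(x_i)=\sigma(x_i)$ and $\sigma_i(y_i)=\sigma(y_i)$; then $\Sigma=\sigma_1\times\sigma_2\in\Aut(G)$ realises $\sigma$ on $\D$ by Step 3. For the converse, let $\Sigma\in\Aut(G)$ realise $\sigma$ on $\D$; Step 2 gives $\Sigma=\sigma_1\times\sigma_2$, and comparing components in the identity $\Sigma\bigl((x_1,x_2)\bigr)=\sigma\bigl((x_1,x_2)\bigr)$ (and the analogous one for $y$) via Step 3 yields $\sigma_i(x_i)=\sigma(x_i)$ and $\sigma_i(y_i)=\sigma(y_i)$, so each $\D_i$ admits $\sigma$.

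The main obstacle is Step 2, the splitting of $\Aut(G_1\times G_2)$; once the direct factors are recognised as characteristic via the order argument, the rest consists of matching components, which is what makes the coprimeness of $|G_1|$ and $|G_2|$ indispensable in both directions.
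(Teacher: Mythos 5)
Your proof is correct and follows essentially the same route as the paper: apply Proposition~\ref{COPR} to get $G=G_1\times G_2$, then use the fact that automorphisms of a direct product of groups of coprime order split as $\sigma_1\times\sigma_2$, and match components. The only differences are that you prove the splitting lemma (via the characteristic-subgroup order argument) where the paper cites Huppert, and you make explicit the word-evaluation compatibility that the paper leaves implicit; both are sound.
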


\begin{proof}Let $\D=(G,x,y)$ where $x=(x_1,x_2)$, $y=(y_1,y_2)$ and $G=\la x,y\ra$. Since $\gcd(|G_1|,|G_2|)=1$, by Proposition~\ref{COPR}, $G=G_1\times G_2$. It is well-known that each automorphism $\sigma\in\Aut(G)$ is induced by the automorphisms $\sigma_i\in\Aut(G_i)$ $(i=1,2)$ given by
\[\sigma(g_1,g_2)=(\sigma_1(g_1),\sigma_2(g_2)),\quad \text{for all $g_i\in G_i \,(i=1,2)$.}
\]
So $\Aut(G)\cong\Aut(G_1)\times\Aut(G_2)$ \cite[Theorem I.9.4]{Huppert1967}. It follows that the generating pair $(x,y)$ of $G$ possesses a $\sigma$-property if and only if so are the generating pairs of $G_i$ $(i=1,2)$, as required. \end{proof}

The following example shows that the condition $\gcd(|G_1|,|G_2|)=1$ of Proposition~\ref{SRS} is indispensable.
\begin{example}
Let $G_1$ and $G_2$ be the groups given as follows:
\begin{align*}&G_1=\la x_1,y_1\mid x_1^3=y_1^2=[x_1,y_1]=1\ra\cong \Z_3\oplus\Z_2,\\
&G_2=\la x_2,y_2\mid x_2^2=y_2^3=[x_2,y_2]=1\ra\cong\Z_2\oplus\Z_3.
\end{align*}
Let $G=\la x,y\ra$ where $x=(x_1,x_2)$ and $y=(y_1,y_2)$. Then
\[
G=\la x,y\mid x^6=y^6=[x,y]=1\ra\cong\Z_6\oplus\Z_6.
\]
It is easily seen that the dessin $\D=(G,x,y)$ is symmetric (in fact, totally symmetric), while the dessins $\D_i=(G_i,x_i,y_i)$ $(i=1,2)$ are not.

\end{example}


The following proposition reveals the relationship between the nilpotency class of a nilpotent regular dessin and the class of its shadow dessin.
\begin{proposition}\label{SM}
Let $\D=(G,x,y)$ be a nilpotent regular dessin, and let $\bar\D=(\bar G,\bar x,\bar y)$ be its shadow dessin. If $c(G)=c\geq 2$, then $c-1\leq c(\bar G)\leq c$. Conversely, if $c(\bar G)=c$, then $c\leq c(G)\leq c+1$.
\end{proposition}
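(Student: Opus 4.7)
The plan is to deduce Proposition~\ref{SM} from a classical lemma on how the nilpotency class behaves under quotients by a central subgroup. The first step is to identify the kernel of the shadow covering $\D\to\bar\D$: by definition $\bar G=G/K$ with $K=\la x\ra\cap\la y\ra$, and since every element of $K$ lies in $\la x\ra$ and in $\la y\ra$ it commutes with both generators and hence with the whole group $G=\la x,y\ra$. Consequently $K\leq Z(G)$, so the shadow dessin is obtained from $\D$ by factoring out a central cyclic subgroup.

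With this observation in hand, the proposition becomes an instance of the following general fact: for any finite group $G$ and any central subgroup $N\leq Z(G)$, one has
\[
c(G/N)\;\leq\;c(G)\;\leq\;c(G/N)+1.
\]
The upper bound is verified via the lower central series, using the functorial identity $(G/N)_i=G_iN/N$: if $c=c(G)$ then $G_{c+1}=1$, so $(G/N)_{c+1}=1$ and therefore $c(G/N)\leq c$. For the lower bound, suppose $c(G/N)=d$; then $G_{d+1}\leq N$, and
\[
G_{d+2}=[G_{d+1},G]\leq[N,G]\leq[Z(G),G]=1,
\]
so $c(G)\leq d+1$.

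Applying this lemma with $N=K$ immediately yields both halves of the proposition. If $c(G)=c\geq 2$, then $c(\bar G)\leq c(G)=c$ and $c(G)\leq c(\bar G)+1$ together give $c-1\leq c(\bar G)\leq c$; conversely, if $c(\bar G)=c$, the same two inequalities applied in the opposite direction give $c\leq c(G)\leq c+1$. I anticipate no real obstacle here: the only dessin-theoretic input is the centrality of the shadow kernel, which is not a feature of arbitrary quotients of $G$ but is forced by the special form $K=\la x\ra\cap\la y\ra$. Once this is isolated, the remainder is the standard central-quotient argument on the lower central series of a nilpotent group, and the hypothesis $c\geq 2$ is only there to ensure the lower bound $c-1\leq c(\bar G)$ carries nontrivial content.
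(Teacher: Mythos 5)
Your proof is correct and rests on the same key observation as the paper's, namely that the shadow kernel $K=\la x\ra\cap\la y\ra$ is central in $G$, so that passing to the shadow dessin lowers the nilpotency class by at most one. The only difference is cosmetic: the paper derives the lower bound $c(\bar G)\geq c-1$ from the identity $c(G/Z(G))=c-1$ together with the third isomorphism theorem, handling the converse by contradiction, whereas you prove the two-sided inequality $c(G/N)\leq c(G)\leq c(G/N)+1$ directly on the lower central series via $G_{d+2}\leq[Z(G),G]=1$ --- both routes are standard and equally valid.
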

\begin{proof}Let $K=\la x\ra\cap\la y\ra$. Assume $c(G)=c\geq 2$. Clearly, $c(\bar G)\leq c$. Note that $Z(G)<G$ and $c(G/Z(G))=c-1$. Since $K\leq Z(G)$, we have $G/Z(G)\cong (G/K)/(Z(G)/K)$. It follows that $c(\bar G)\geq c-1$. Conversely, assume $c(G/K)=c$. Then $c\leq c(G)$. To prove $c(G)\leq c+1$, suppose to the contrary that $c(G)\geq c+2$. Then the previous assertion implies that $c(G/K)\geq c+1$, a contradiction. \end{proof}

In order to simplify the explanation, a regular dessin  whose automorphism group is a $p$-group will be called a regular \textit{$p$-dessin}. The following  theorem allows us to reduce the classification of nilpotent regular dessins to the investigation of regular $p$-dessins.
\begin{theorem}[Decomposition Theorem]\label{REDUCTION}
Let $G\cong\prod_{i=1}^kG_i$ be the direct product decomposition of a finite two-generated nilpotent group $G$ where $G_i$ are the Sylow $p_i$-subgroups of $G$. Then a regular dessin $\D=(G,x,y)$ is the parallel product of regular dessins $\D_i=(G_i,x_i,y_i)$ where $x_i$ and $y_i$ are the images of $x$ and $y$ under the natural projections $\pi_i:G\to G_i$.

In particular, the dessin $\D$ possesses an external symmetry $\sigma$ if and only if so are the dessins $\D_i$ $(i=1,2,\ldots, k)$.
\end{theorem}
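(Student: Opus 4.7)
The plan is to translate Proposition~\ref{NILP}(i) into the algebraic-dessin language and then iterate Propositions~\ref{COPR} and~\ref{SRS}. First, since $G=\langle x,y\rangle$ is nilpotent with Sylow decomposition $G\cong\prod_{i=1}^k G_i$, each projection $\pi_i\colon G\to G_i$ is surjective, so $G_i=\langle x_i,y_i\rangle$ where $x_i=\pi_i(x)$ and $y_i=\pi_i(y)$. In particular each triple $\D_i=(G_i,x_i,y_i)$ is a well-defined regular dessin.

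Second, I would prove the parallel-product identification by induction on $k$. The base case $k=1$ is vacuous. For the inductive step, note that $o(x_i)$ divides $|G_i|$, which is a power of $p_i$, so the orders $o(x_1),\ldots,o(x_k)$ are pairwise coprime, and the same holds for the orders of the $y_i$ and of the products $x_iy_i$. Hence Proposition~\ref{COPR} applies (twice over) at every step, and iterating shows that the parallel product $\D_1\Join\cdots\Join\D_k$ is the regular dessin based on the group $\prod_{i=1}^k G_i$ generated by $\bigl((x_1,\ldots,x_k),(y_1,\ldots,y_k)\bigr)$. Under the isomorphism $G\cong\prod G_i$, this generating pair corresponds precisely to $(x,y)$, so the parallel product is isomorphic (as an algebraic dessin) to $\D$.

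Third, for the assertion about external symmetries I would again proceed by induction on $k$. The case $k=2$ is Proposition~\ref{SRS} itself, since $\gcd(|G_1|,|G_2|)=1$. For the inductive step, set $H=\prod_{i=1}^{k-1}G_i$ and let $\D_H$ be the parallel product of $\D_1,\ldots,\D_{k-1}$; by the inductive hypothesis $\D_H$ is the regular dessin on $H$ with generators $((x_1,\ldots,x_{k-1}),(y_1,\ldots,y_{k-1}))$. Since $\gcd(|H|,|G_k|)=1$, Proposition~\ref{SRS} applied to $\D=\D_H\Join\D_k$ shows that $\D$ admits $\sigma$ if and only if both $\D_H$ and $\D_k$ do; a second application of the inductive hypothesis to $\D_H$ then completes the equivalence.

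I expect the substantive content to lie entirely in the first two paragraphs; the main (mild) obstacle is to verify cleanly that Proposition~\ref{COPR} can be applied at each stage of the iterated parallel product, i.e.\ that the coprimality of the orders $o(x_i),o(y_i),o(x_iy_i)$ is preserved when one of the factors is itself a parallel product with group $\prod_{j<i}G_j$. This is immediate, however, because the orders of elements in $\prod_{j<i}G_j$ divide $\prod_{j<i}|G_j|$ and hence are coprime to $|G_i|$, so the relevant hypotheses of Propositions~\ref{COPR} and~\ref{SRS} hold automatically at every stage.
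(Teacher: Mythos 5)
Your proposal is correct and follows essentially the same route as the paper: well-definedness of the $\D_i$ via surjectivity of the projections, identification of $\D$ with the parallel product through the isomorphism $G\cong\prod_{i=1}^k G_i$, and induction combined with Proposition~\ref{SRS} for the statement about external symmetries. The only cosmetic difference is that you justify $\la (x_1,\ldots,x_k),(y_1,\ldots,y_k)\ra=\prod_i G_i$ by iterating Proposition~\ref{COPR} with the coprimality of element orders, whereas the paper obtains it directly by observing that the diagonal map $g\mapsto(\pi_1(g),\ldots,\pi_k(g))$ is already an isomorphism of $G$ onto the full direct product; both arguments rest on the same Sylow coprimality and are equally valid.
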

\begin{proof}Since $G$ is nilpotent, the natural projection $\pi_i:G\to G_i$ sends the generating pair $(x,y)$ of $G$ to the generating pair $(x_i,y_i)$ of the Sylow $p_i$-subgroups $G_i$. Therefore, $\D_i=(G_i,x_i,y_i)$ are well-defined regular dessins. The assignment $x\mapsto (x_1,x_2,\ldots, x_k), y\mapsto (y_1,y_2,\ldots, y_k)$ extends to an isomorphism from $G$ onto $\prod_{i=1}^kG_i$. Hence $\D$ is the parallel product of the dessins $\D_i$. Employing induction and  Proposition~\ref{SRS}, the dessin $\D$ possesses an external symmetry $\sigma$ if and only if so are the dessins $\D_i$ $(i=1,2,\ldots, k)$.
\end{proof}

\begin{remark}
 The parallel product has been mostly investigated in the category of maps. It was first defined by Wilson in \cite{Wilson1994}, and then investigated by Breda and Nedela under the name \textit{join} in \cite{BN2001} in terms of the lattice structure of the subgroups of the triangle groups. It was also called \textit{direct product} in \cite{MNS2012}, and \textit{Cartesian product} in \cite{JNS2008} in a slightly more restricted sense.
 \end{remark}

\section{Abelian regular dessins}
A regular dessin with an abelian automorphism group will be called an \textit{abelian regular dessin}.
In this section, abelian regular dessins are classified.  They were investigated
in \cite{Hidalgo2013} as well. However, we give a more detailed analysis
of such dessins including the enumeration of isomorphism classes and computation of main parameters.

We first study the cyclic regular dessins. The following preliminary results will be useful.
\begin{lemma}\cite{Hua1982}\label{CONGRUENCE}
Let $m$ and $n$ be positive integers, and let $a_i$ $(i=1,2,\cdots, n)$ be integers. Then the congruence $a_1x_1+a_2x_2+\cdots+a_nx_n\equiv b\pmod{m}$ is solvable if and only if $\gcd(a_1,a_2,\cdots a_n,m)|b$.
\end{lemma}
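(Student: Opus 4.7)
The plan is to recognize this as the standard fact that the image of the $\mathbb{Z}$-linear map $\mathbb{Z}^n\to\mathbb{Z}/m\mathbb{Z}$ sending $(x_1,\ldots,x_n)$ to $a_1x_1+\cdots+a_nx_n\pmod m$ is precisely the subgroup of $\mathbb{Z}/m\mathbb{Z}$ generated by the residue classes of $a_1,\ldots,a_n$, and that this subgroup equals $d\mathbb{Z}/m\mathbb{Z}$ where $d=\gcd(a_1,\ldots,a_n,m)$. The congruence is therefore solvable for a given $b$ precisely when $b\pmod m$ lies in $d\mathbb{Z}/m\mathbb{Z}$, i.e.\ when $d\mid b$.

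First I would dispose of necessity. Put $d=\gcd(a_1,\ldots,a_n,m)$. If $(x_1,\ldots,x_n)$ is a solution, then $b=a_1x_1+\cdots+a_nx_n-km$ for some integer $k$, exhibiting $b$ as an integer linear combination of $a_1,\ldots,a_n,m$. Since $d$ divides each of these, $d\mid b$.

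For sufficiency I would appeal to the generalised B\'ezout identity: there exist integers $u_1,\ldots,u_n,v$ with
\[
a_1u_1+a_2u_2+\cdots+a_nu_n+mv\;=\;d.
\]
Assuming $d\mid b$, write $b=cd$ for an integer $c$ and set $x_i:=cu_i$. Then
\[
a_1x_1+\cdots+a_nx_n\;=\;c(d-mv)\;=\;b-cmv\;\equiv\;b\pmod m,
\]
so the congruence is solvable.

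The only point requiring a word of justification is the generalised B\'ezout identity for $n+1$ integers, which I would obtain by a straightforward induction on $n$: the base case $n=1$ is the classical two-variable B\'ezout identity applied to $a_1$ and $m$, and the inductive step follows by first expressing $\gcd(a_1,\ldots,a_{n-1},m)$ as such a combination and then combining this single value with $a_n$ via B\'ezout once more. No genuine obstacle arises, and the whole argument is very short; the only mild subtlety is keeping track of which variables are being reduced modulo $m$ versus treated as integers, which the presentation above handles cleanly.
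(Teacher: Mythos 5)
Your proof is correct: the necessity direction and the sufficiency direction via the generalised B\'ezout identity are both sound, and the computation $\sum a_i(cu_i)=c(d-mv)\equiv b\pmod m$ checks out. The paper does not prove this lemma itself but simply cites Hua's \emph{Introduction to Number Theory}, and your argument is exactly the standard one found there, so there is nothing further to compare.
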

\begin{lemma}\cite{HNW2014}\label{LIFT}
Let $m$ and $n$ be positive integers where $m|n$. Then for each number $s$, $1\leq s< m$, such that $\gcd(s,m)=1$, there is a number $s'$, $1\leq s'< n$, such that $\gcd(s',n)=1$ and $s'\equiv s\pmod{m}$.
\end{lemma}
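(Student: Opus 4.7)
The plan is to construct $s'$ by the Chinese Remainder Theorem, imposing one congruence modulo $m$ (to ensure $s'\equiv s\pmod{m}$) and an auxiliary coprimality condition modulo the remaining prime divisors of $n$. Write $n=m\cdot (n/m)$ and let $P$ denote the product of the distinct primes that divide $n$ but do not divide $m$. Since every such prime appears in $n/m$ to positive order, we immediately get $P\mid n/m$, and since by definition no prime of $P$ divides $m$ we have $\gcd(m,P)=1$; these two facts will carry the whole argument.

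First I would reduce the coprimality condition to prime-by-prime. A number $s'$ satisfies $\gcd(s',n)=1$ if and only if no prime $p\mid n$ divides $s'$. For the primes $p\mid m$ the condition $s'\equiv s\pmod{m}$ automatically gives $s'\equiv s\pmod{p}$, and since $\gcd(s,m)=1$ we have $p\nmid s$, hence $p\nmid s'$ for free. It therefore remains only to arrange that $p\nmid s'$ for each prime $p\mid n$ with $p\nmid m$, i.e.\ for each prime divisor of $P$.

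Next I would apply the Chinese Remainder Theorem to the coprime moduli $m$ and $P$, solving the system
\[
s'\equiv s\pmod{m},\qquad s'\equiv 1\pmod{P}.
\]
The second congruence ensures $p\nmid s'$ for every prime $p\mid P$, which together with the automatic divisibility check above gives $\gcd(s',n)=1$. Up to this point the argument is a standard CRT packaging and no real difficulty appears.

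The only delicate point is the range requirement $1\le s'<n$. Here I would use that $mP\mid n$ (from $P\mid n/m$) to guarantee $mP\le n$, then take the unique representative $s'$ of the CRT solution in $\{1,2,\ldots,mP\}$. This representative cannot equal $mP$, because $s'\equiv mP\equiv 0\pmod{m}$ would contradict $s'\equiv s\pmod{m}$ together with $\gcd(s,m)=1$ (the case $m=1$ makes the hypothesis $1\le s<m$ vacuous and can be discarded). Hence $1\le s'\le mP-1<n$, as required. I expect this range bookkeeping to be the only place where one must be careful; everything else is a direct consequence of the coprimality $\gcd(m,P)=1$ and the divisibility $mP\mid n$.
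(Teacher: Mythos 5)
Your proof is correct. Note that the paper itself gives no proof of this lemma: it is stated with a citation to \cite{HNW2014} (a paper listed as ``in preparation''), so there is no in-text argument to compare against. Your CRT construction is the standard one and is carried out carefully: the key observations --- that $P\mid n/m$ and $\gcd(m,P)=1$, that the congruence $s'\equiv s\pmod{m}$ already handles the primes dividing $m$ because $\gcd(s,m)=1$, and that the representative in $\{1,\dots,mP\}$ cannot equal $mP$ since $m\nmid s'$ --- are all valid, and the degenerate cases ($m=1$, $P=1$, $n=m$) are covered by the same argument. The only remark worth making is that the range bookkeeping could be shortened: since $mP\mid n$ and $mP>1$ once $m>1$, any CRT representative in $\{0,1,\dots,mP-1\}$ is automatically nonzero (as $m\nmid s'$) and less than $n$, so one need not single out the endpoint $mP$ at all.
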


\begin{lemma}\cite{Hu2013}\label{CRIT}
Let $m$ be a positive integer, and let $a,b$ be integers. Then the congruence $ax\equiv b\pmod{m}$ has a solution $x\in\Z_m^*$ if and only if $\gcd(a,m)=\gcd(b,m)$.
\end{lemma}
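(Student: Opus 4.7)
The plan is to prove both directions separately, relying on the earlier Lemma~\ref{LIFT} to handle the existence of a unit solution in the reverse direction.

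For the forward direction, I would start from a solution $x\in\Z_m^*$ of the congruence $ax\equiv b\pmod{m}$. The key observation is that since $\gcd(x,m)=1$, multiplication by $x$ preserves gcd with $m$; that is, $\gcd(ax,m)=\gcd(a,m)$. On the other hand, the congruence $ax\equiv b\pmod{m}$ means $ax$ and $b$ differ by a multiple of $m$, so $\gcd(ax,m)=\gcd(b,m)$. Combining these two equalities gives $\gcd(a,m)=\gcd(b,m)$.

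For the reverse direction, assume $\gcd(a,m)=\gcd(b,m)=d$ and write $a=da'$, $b=db'$, $m=dm'$ with $\gcd(a',m')=\gcd(b',m')=1$. The congruence $ax\equiv b\pmod{m}$ is then equivalent to $a'x\equiv b'\pmod{m'}$. Since $a'$ is a unit modulo $m'$, this has a unique solution $x\equiv y\pmod{m'}$, where $y\equiv b'(a')^{-1}\pmod{m'}$ satisfies $\gcd(y,m')=\gcd(b',m')=1$. At this stage I would invoke Lemma~\ref{LIFT}: since $m'\mid m$ and $\gcd(y,m')=1$, there exists an integer $x$ with $1\leq x<m$, $\gcd(x,m)=1$, and $x\equiv y\pmod{m'}$. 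This $x$ clearly satisfies $a'x\equiv b'\pmod{m'}$, hence also $ax\equiv b\pmod{m}$, and lies in $\Z_m^*$.

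The two directions are both short, and the only nontrivial input is Lemma~\ref{LIFT}, which supplies the needed lift from a unit mod $m'$ to a unit mod $m$; the main (mild) obstacle is simply checking that the arithmetic compatibility $\gcd(ax,m)=\gcd(a,m)$ when $\gcd(x,m)=1$ is used correctly, and that the reduction $a'x\equiv b'\pmod{m'}$ is not vacuous (which is guaranteed by $\gcd(a',m')=1$). No additional technical machinery beyond elementary number theory and the already established Lemma~\ref{LIFT} is required.
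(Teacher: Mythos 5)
Your proof is correct. Note that the paper itself gives no proof of Lemma~\ref{CRIT}: it is imported verbatim from the thesis \cite{Hu2013}, so there is no in-paper argument to compare against. Your two directions are both sound: the forward direction rests on the fact that $\gcd(ax,m)=\gcd(a,m)$ whenever $\gcd(x,m)=1$ (which holds because any common divisor $e$ of $ax$ and $m$ is coprime to $x$ and hence divides $a$), combined with the invariance of $\gcd(\cdot,m)$ under adding multiples of $m$; the reverse direction correctly reduces modulo $m'=m/d$, where $a'$ becomes a unit, and then lifts the resulting unit $y\in\Z_{m'}^*$ to a unit of $\Z_m^*$ via Lemma~\ref{LIFT}, which is exactly the tool the paper has set up for this purpose. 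The only point worth flagging is the degenerate case $m'=1$ (equivalently $d=m$, i.e.\ $m\mid a$ and $m\mid b$), where Lemma~\ref{LIFT} as stated quantifies over $1\leq s<m'$ and is vacuous; there one simply takes $x=1\in\Z_m^*$ directly. This is a triviality and does not affect the validity of your argument.
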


\begin{theorem}\label{CYCLIC}
Let $C_m=\la g\ |\ g^m=1\ra$ be the cyclic group of order $m$ and  $\C(m;r,s)=(C_m,g^r,g^s) $ be the regular dessin corresponding to a generating pair $(g^r,g^s)$ of $C_m$. Then the following hold true:
\begin{enumerate}
\item[\rm(i)]a pair $(g^r,g^s)$ generates $C_m$ if and only if $\gcd(r,s,m)=1$, moreover, $\C(m;r,s)\cong\C(m;t,q)$ if and only if $t\equiv rk\pmod{m}$ and $q\equiv sk\pmod{m}$ for some $k\in\Z_m^*$;
\item[\rm(ii)]a pair $(g^r,g^s)$ is a transpositional generating pair of $C_m$ if and only if $\gcd(r,m)=1$ and $s\equiv re\pmod{m}$ where $e^2\equiv1\pmod{m}$, moreover, $\C(m;r, re)\cong\C(m;t,tf)$ if and only if $e\equiv f\pmod{m}$.
\end{enumerate}
\end{theorem}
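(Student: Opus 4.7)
The plan is to invoke Theorem~\ref{CORR} to translate isomorphism of algebraic dessins into equivalence of generating pairs under the action of $\Aut(C_m)\cong\Z_m^*$, combined with Proposition~\ref{TPROPERTY} to handle the symmetric (transpositional) case. Throughout I will repeatedly use the elementary fact that in $C_m=\la g\ra$ the subgroup $\la g^r\ra$ coincides with $\la g^{\gcd(r,m)}\ra$ and has order $m/\gcd(r,m)$.

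For part (i), the generating condition is immediate: $\la g^r,g^s\ra=\la g^{\gcd(r,s)}\ra=\la g^{\gcd(r,s,m)}\ra$, which equals $C_m$ exactly when $\gcd(r,s,m)=1$. For the isomorphism criterion, every automorphism of $C_m$ has the shape $g\mapsto g^k$ with $k\in\Z_m^*$, so by Theorem~\ref{CORR} the dessin $\C(m;r,s)$ is isomorphic to $\C(m;t,q)$ if and only if some such automorphism sends $(g^r,g^s)$ to $(g^t,g^q)$; this produces exactly the congruences $t\equiv rk\pmod{m}$ and $q\equiv sk\pmod{m}$.

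For part (ii), I first derive the arithmetic constraints forced by transpositionality. Since every automorphism of $C_m$ preserves orders, $o(g^r)=o(g^s)$ and hence $\gcd(r,m)=\gcd(s,m)=:d$; then $\la g^r\ra=\la g^s\ra=\la g^d\ra$, and the generating condition from (i) forces $d=1$. Writing $s\equiv re\pmod{m}$ with $\gcd(e,m)=1$, the transpositional automorphism $\tau$ must be $g\mapsto g^e$; imposing $\tau(g^s)=g^r$ yields $re^2\equiv r\pmod{m}$, hence $e^2\equiv 1\pmod{m}$ after cancelling the unit $r$. (This also recovers the conclusion of Proposition~\ref{TPROPERTY} specialised to the cyclic case, where $k=m$.) Conversely, whenever $e^2\equiv 1\pmod{m}$ the assignment $g\mapsto g^e$ is a well-defined automorphism of $C_m$, since $\gcd(e,m)=1$ follows from $e^2\equiv 1\pmod{m}$, and it clearly swaps $g^r$ and $g^{re}=g^s$.

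The isomorphism statement in (ii) is then a direct specialisation of (i). Given $\C(m;r,re)\cong\C(m;t,tf)$ with $r,t\in\Z_m^*$, the multiplier $k\in\Z_m^*$ supplied by (i) must satisfy $rk\equiv t\pmod{m}$, so $k\equiv tr^{-1}\pmod{m}$ is uniquely determined; substituting into $rek\equiv tf\pmod{m}$ and cancelling the unit $t$ yields $e\equiv f\pmod{m}$, and the converse is a one-line verification. I do not foresee any serious obstacle: the proof is essentially careful bookkeeping of units in $\Z_m$ together with clean applications of Theorem~\ref{CORR} and Proposition~\ref{TPROPERTY}, and Lemmas~\ref{CONGRUENCE}--\ref{CRIT} are not required at this stage although they will presumably be needed for the general abelian classification that follows.
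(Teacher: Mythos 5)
Your proposal is correct and follows essentially the same route as the paper: both reduce the isomorphism questions to the action of $\Aut(C_m)\cong\Z_m^*$ via Theorem~\ref{CORR}, derive $\gcd(r,m)=1$ and $e^2\equiv1\pmod{m}$ from the transposing automorphism $g\mapsto g^e$, and obtain the criterion $e\equiv f\pmod m$ by solving for the unique multiplier $k\equiv r^{-1}t$. The only cosmetic difference is that in part (i) you identify $\la g^r,g^s\ra=\la g^{\gcd(r,s,m)}\ra$ directly, where the paper instead reduces to the solvability of $ri+sj\equiv1\pmod m$ and cites Lemma~\ref{CONGRUENCE}; the two arguments are interchangeable.
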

\begin{proof}
(i) Since $C_m=\la g\ra$ is cyclic of order $m$, a pair $(g^r,g^s)$ generates $C_m$ if and only if there exist some integers $i$ and $j$ such that $g=g^{ri}g^{sj},$ that is, the equation $ri+sj\equiv1\pmod{m}$ is solvable. By Lemma~\ref{CONGRUENCE}, this is equivalent to the condition $\gcd(r,s,m)=1$. Moreover, by Theorem~\ref{CORR}, $\C(m;r,s)\cong\C(m;t,q)$ if and only if there is an automorphism of $C_m$ sending $(g^r,g^s)$ to $(g^t,g^q)$. Since the automorphisms of $C_m$ have form $g\mapsto g^k$ for some $k\in\Z_m^*$, we have $t\equiv rk\pmod{m}$ and $q\equiv sk\pmod{m}$.

(ii) If $(g^r,g^s)$ is a transpositional generating pair of $C_m$, then there is an automorphism $\tau:g\mapsto g^e$ of $C_m$ where $e\in\Z_m^*$ such that $\tau:g^r\mapsto g^s=g^{re}, g^s\mapsto g^r=g^{se}$. Hence, $s\equiv re\pmod{m}$ and $r\equiv se\pmod{m}$. It follows that $r(e^2-1)\equiv0\pmod{m}$. Recall that $C_m=\la g^r,g^s\ra$. Since $s\equiv re\pmod{m}$ and $\gcd(e,m)=1$, we have $\gcd(r,m)=1$. Hence, $e^2\equiv1\pmod{m}$. Conversely,  it is straightforward to verify that each pair $(g^r,g^{re})$ where $\gcd(r,m)=1$ and $e^2\equiv1\pmod{m}$ is a transpositional generating pair of $C_m$.

  Let  $r,t,e,f\in\Z_m^*$ where $e^2\equiv1\pmod{m}$ and $f^2\equiv1\pmod{m}$. By (i), $\C(m;r,re)\cong\C(m;t,tf)$ if and only if $t\equiv rk\pmod{m}$ and $tf\equiv rek$ for some $k\in\Z_m^*$. Since $r,t\in\Z_m^*$, we have $k\equiv r^{-1}t\pmod{m}$. Thus, the isomorphism conditions are reduced to $e\equiv f\pmod{m}$, as required.\end{proof}

\begin{corollary}Let $\C(m;r,s)$ denote the dessins from Theorem~\ref{CYCLIC}. Then under the generalised Wilson's operation, two such dessins $\C(m;r,s)$ and $\C(m;t,q)$ belong to the same orbit if and only if $\gcd(r,m)=\gcd(t,m)$ and $\gcd(s,m)=\gcd(q,m)$. In particular, every such dessin is invariant under the Wilson's operation.
\end{corollary}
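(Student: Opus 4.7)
The plan is to interpret $H_{i,j}$ explicitly on cyclic dessins and then reduce both directions to Theorem~\ref{CYCLIC}(i) combined with the auxiliary lemmas.

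First I would unpack the definitions. For $\D=\C(m;r,s)$ one has $o(g^r)=m/\gcd(r,m)$ and $o(g^s)=m/\gcd(s,m)$, so the admissible pairs for $H_{i,j}$ are those with $\gcd(i,m/\gcd(r,m))=1$ and $\gcd(j,m/\gcd(s,m))=1$, and in that case $H_{i,j}(\D)=\C(m;ri,sj)$. The key elementary fact to record is: if $d=\gcd(a,m)$ and $\gcd(i,m/d)=1$, then $\gcd(ai,m)=d$. This is proved by setting $a=da'$, $m=dm'$ with $\gcd(a',m')=1$ and observing $\gcd(a'i,m')=1$.

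For the forward direction, if $\C(m;t,q)$ lies in the orbit of $\C(m;r,s)$ under the generalised Wilson's operation, then $\C(m;t,q)\cong\C(m;ri,sj)$ for some admissible $i,j$, and Theorem~\ref{CYCLIC}(i) furnishes $k\in\Z_m^*$ with $t\equiv rik\pmod m$ and $q\equiv sjk\pmod m$. Since $\gcd(k,m)=1$, the gcd fact above yields $\gcd(t,m)=\gcd(ri,m)=\gcd(r,m)$, and similarly $\gcd(q,m)=\gcd(s,m)$. For the backward direction, from the assumed gcd equalities Lemma~\ref{CRIT} supplies $i,j\in\Z_m^*$ with $ri\equiv t\pmod m$ and $sj\equiv q\pmod m$; because $i\in\Z_m^*$ the admissibility condition $\gcd(i,m/\gcd(r,m))=1$ is automatic, and similarly for $j$, so $H_{i,j}(\C(m;r,s))=\C(m;t,q)$ directly.

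The only slightly delicate point is the ``in particular'' assertion about Wilson's operation $H_j=H_{j,j}$, which requires $\gcd(j,L)=1$ for $L=\lcm(m/\gcd(r,m),m/\gcd(s,m))$. Here $H_j(\C(m;r,s))=\C(m;rj,sj)$, and by Theorem~\ref{CYCLIC}(i) isomorphism with $\C(m;r,s)$ is equivalent to the existence of $k\in\Z_m^*$ satisfying $j\equiv k\pmod L$. Since $L\mid m$ and $\gcd(j,L)=1$, Lemma~\ref{LIFT} provides exactly such a $k$. This last appeal to Lemma~\ref{LIFT} is the main (mild) technical ingredient; everything else is congruence bookkeeping around Theorem~\ref{CYCLIC}(i).
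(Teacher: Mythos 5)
Your proof is correct and follows essentially the same route as the paper: both reduce the orbit question to the congruences of Theorem~\ref{CYCLIC}(i) and settle them with Lemmas~\ref{LIFT} and~\ref{CRIT}. The only difference is organizational — you dispatch the forward direction with the direct observation that $\gcd(ai,m)=\gcd(a,m)$ for admissible $i$ and reserve Lemma~\ref{LIFT} for the Wilson-invariance claim, whereas the paper lifts $i,j$ to $\Z_m^*$ at the outset; your treatment of the \emph{in particular} step is in fact spelled out more carefully than the paper's.
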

\begin{proof}With the notation above, $\C(m;r,s)=(C_m,g^r,g^s)$. Recall that the generalised Wilson's operation $H_{i,j}$ sends the dessin $\C(m;r,s)$ to the dessin $\C(m;ri,sj)=(C_m,g^{ri},g^{sj})$ where $i\in\Z_{m/\gcd(r,m)}^*$ and $j\in\Z_{m/\gcd(s,m)}^*$. By Lemma~\ref{LIFT}, the elements $i$ and $j$ lift to the elements $i',j'\in\Z_m^*$ such that $i'\equiv i\pmod{m/\gcd(r,m)}$ and $j'\equiv j\pmod{m/\gcd(s,m)}$. Therefore, by Theorem~\ref{CYCLIC}, two such dessins $\C(m;r,s)$ and $\C(m;t,q)$ belong to the same orbit if and only if both the equations
\begin{align}\label{GWCOND}
t\equiv rx\pmod{m}\quad \text{and}\quad q\equiv sy\pmod{m}
\end{align}
have solutions $x=ki'$ and $y=kj'$ in $\Z_m^*$. By Lemma~\ref{CRIT}, this is equivalent to that $\gcd(r,m)=\gcd(t,m)$ and $\gcd(s,m)=\gcd(q,m)$.

In particular, since $H_j=H_{j,j}$, we deduce from the above discussion that two such dessins $\C(m;r,s)$ and $\C(m;t,q)$ belong to the same orbit under $H_j$ if and only if the equations in \eqref{GWCOND} have a common solution $x=y\in\Z_m^*$. By Theorem~\ref{CYCLIC}, this means that the dessins are isomorphic,   as required.
\end{proof}

In the following statement we summarise the main properties
of the cyclic dessins.

\begin{corollary} Let $\C(m;r,s)$ be the cyclic regular dessin from Theorem~\ref{CYCLIC}. Let $d_1=\gcd(m,r)$, $d_2=\gcd(m,s)$ and $d_3=\gcd(m,r+s)$.
Then
\begin{itemize}
\item[\rm(i)] $\C(m;r,s)$ is reflexible,
\item[\rm(ii)] the type of $\C(m;r,s)$ is $(m/d_1,m/d_2,m/d_3)$,
\item[\rm(iii)] the genus of $\C(m;r,s)$ is  $g=\frac{1}{2}\left(m+2-d_1-d_2-d_3\right)$,
\item[\rm(iv)]  the underlying graph of $\C(m;r,s)$ is the complete bipartite graph $K_{d_1,d_2}^{(c)}$ of multiplicity $c=m/\gcd(m,\lcm(r,s))$,
\item[\rm(v)] if $\C(m;r,s)$ is symmetric, then $d_1=d_2=1$ and the underlying graph is the $m$-dipole $K_{1,1}^{(m)}$.
\end{itemize}
\end{corollary}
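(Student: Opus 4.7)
The plan is to work through items (i)--(v) in turn, using Proposition~\ref{EPformula}, Theorem~\ref{CYCLIC}, Proposition~\ref{TPROPERTY}, and the elementary arithmetic of cyclic groups. The work is largely bookkeeping except for one number-theoretic identity needed in (iv).

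For (i), reflexibility amounts to showing that the assignment $g^r\mapsto g^{-r}$, $g^s\mapsto g^{-s}$ extends to an automorphism of $C_m$; this is immediate because inversion $g\mapsto g^{-1}$ is an automorphism of any abelian group. For (ii), I would simply note that in a cyclic group of order $m$ one has $o(g^k)=m/\gcd(m,k)$, and apply this to $g^r$, $g^s$, and $g^{r+s}$. For (iii), substituting these orders into the Euler--Poincar\'e formula of Proposition~\ref{EPformula} yields
\[
2-2g=m\left(\frac{d_1}{m}+\frac{d_2}{m}+\frac{d_3}{m}-1\right)=d_1+d_2+d_3-m,
\]
from which the claimed expression for $g$ follows.

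For (iv), the strategy is to pass through the shadow dessin. Setting $K=\la g^r\ra\cap\la g^s\ra$, I would use $\la g^r\ra=\la g^{d_1}\ra$ and $\la g^s\ra=\la g^{d_2}\ra$ to see that $K=\la g^{\ell}\ra$, where $\ell=\lcm(d_1,d_2)$, so $|K|=m/\ell$. The key identity
\[
\lcm\bigl(\gcd(r,m),\gcd(s,m)\bigr)=\gcd\bigl(\lcm(r,s),m\bigr),
\]
provable by comparing $p$-adic valuations via the distributivity of $\max$ over $\min$, then rewrites $|K|$ as $c=m/\gcd(m,\lcm(r,s))$. Since $K\le\la g^r\ra$ and $K\le\la g^s\ra$, the indices $[C_m/K:\la\bar g^r\ra]$ and $[C_m/K:\la\bar g^s\ra]$ reduce to $[C_m:\la g^r\ra]=d_1$ and $[C_m:\la g^s\ra]=d_2$, so the shadow dessin has simple underlying graph $K_{d_1,d_2}$. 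The original dessin is the $|K|$-fold covering obtained by replacing each edge with a bundle of $|K|=c$ parallel edges, giving $K_{d_1,d_2}^{(c)}$.

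For (v), if $\mathcal{C}(m;r,s)$ is symmetric then Theorem~\ref{CYCLIC}(ii) forces $\gcd(r,m)=1$ and $s\equiv re\pmod{m}$ with $\gcd(e,m)=1$; hence also $\gcd(s,m)=1$, so $d_1=d_2=1$. The identity from (iv) then gives $c=m/\gcd(m,\lcm(r,s))=m$, and the underlying graph is the $m$-dipole $K_{1,1}^{(m)}$. The main obstacle I anticipate is verifying the number-theoretic identity in (iv); once this is in hand, the rest is straightforward computation.
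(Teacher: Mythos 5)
Your treatment of (i)--(iii) and (v) coincides with the paper's. For (iv) you take a genuinely different route. The paper argues directly on the coset incidence structure: a black vertex $\la g^r\ra$ is adjacent to the white vertex $g^l\la g^s\ra$ iff the congruence $ir-js\equiv l\pmod m$ is solvable, which by Lemma~\ref{CONGRUENCE} and $\gcd(r,s,m)=1$ holds for every $l$; completeness follows at once, and the multiplicity is read off from $\la g^r\ra\cap\la g^s\ra=\la g^{\lcm(r,s)}\ra$. You instead pass to the shadow dessin and invoke the lattice identity $\lcm(\gcd(r,m),\gcd(s,m))=\gcd(\lcm(r,s),m)$. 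That identity is correct (distributivity of $\min$ and $\max$ on $p$-adic valuations), and it is in fact the step needed to reconcile $|K|=m/\lcm(d_1,d_2)$ with the stated $c=m/\gcd(m,\lcm(r,s))$ --- something the paper passes over silently --- so your version makes the multiplicity computation more transparent, and (v) then falls out immediately.

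There is, however, one gap in your (iv): the inference ``the parts of the shadow have sizes $d_1$ and $d_2$, so its underlying simple graph is $K_{d_1,d_2}$'' does not follow --- knowing the part sizes of a simple bipartite graph does not make it complete, and completeness is exactly the content of the claim. You need one more line, for instance: since $\gcd(d_1,d_2)=\gcd(r,s,m)=1$, the shadow has $|C_m/K|=\lcm(d_1,d_2)=d_1d_2$ edges, which for a simple bipartite graph with parts of sizes $d_1$ and $d_2$ forces completeness (equivalently, each black vertex of the shadow has valency $o(\bar g^r)=\lcm(d_1,d_2)/d_1=d_2$ and is therefore joined to every white vertex). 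With that line added, your argument is complete.
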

\begin{proof}
For a generating pair $(g^r,g^s)$ of $C_m$, the assignment $g^r\mapsto g^{-r}, g^s\mapsto g^{-s}$ is an automorphism of $C_m$ induced by the automorphism $g\mapsto g^{-1}$ of $C_m$. Hence each dessin $\C(m;r,s)$ is reflexible.
The type of  $\C(m;r,s)$ is straithforward and the genus comes from Proposition~\ref{EPformula}.

To determine the underlying graph of $\C(m;r,s)$, let $u=\la g^r\ra$ be a black vertex. Then for any white vertex $v=g^l\la g^s\ra$, $0\leq l\leq s-1$, $u$ is adjacent to $v$ if and only if $\la g^r\ra\cap g^l\la g^s\ra\neq \varnothing$, or equivalently, there are integers $i,j$ such that  $g^{ir}=g^{l+js}$. This is equivalent to that the equation $ir-js\equiv l\pmod{m}$ is solvable. By Theorem~\ref{CYCLIC}(i), $\gcd(r,s,m)=1$. Hence, by Lemma~\ref{CONGRUENCE}, the equation $ir-js\equiv l\pmod{m}$ is solvable for each $l$, $0\leq l\leq s-1$, implying that the black vertex $u$ is adjacent to every white vertex. In particular, since $\la g^r\ra\cap\la g^s\ra=\la g^{\lcm(r,s)}\ra$, the dessin $\C(m;r,s)$ has multiplicity $c=m/\gcd(m,\lcm(r,s))$. It follows from the regularity that the underlying graph of $\C(m;r,s)$ is a complete bipartite graph $K_{d_1,d_2}^{(c)}$ with multiplicity $c$. In particular, if the dessin $\C(m;r,s)$ is symmetric, by Theorem~\ref{CYCLIC}(ii), $r\in\Z_m^*$ and $s=re$ for some $e\in\Z_m^*$. It follows that $d_1=\gcd(r,m)=1$ and $d_2=\gcd(s,m)=1$. Hence
\[c=m/\gcd(m,\lcm(r,s))=m/\gcd(m,\lcm(r,re))=m/\gcd(m,re)=m.\] Therefore, the underlying graph is the $m$-dipole, as claimed. \end{proof}

Now we turn to the classification of abelian regular dessins.  We first point out a folklore fact: Every complete bipartite graph $K_{n,m}$ underlies at least one (simple) regular dessin given by  $\D=(G, x,y)$ where
\[G=\la x,y\mid x^{n}=y^{m}=[x,y]=1\ra\cong\Z_{n}\oplus\Z_{m}.\]
 Following \cite{JNS2008},  we shall call it the \textit{standard embedding} of $K_{n,m}$.

 By the Decomposition Theorem \ref{REDUCTION}, the classification of abelian regular dessin is reduced to the classification of abelian regular $p$-dessins, as classified in the following theorem.

\begin{theorem}\label{ACLASS}
Let $a$ and $b$, $0\leq a\leq b$, be integers and $p$ be a prime. Then each regular dessin $\A=(G,x,y)$ where $G\cong\Z_{p^a}\oplus\Z_{p^b}$ and $o(x)=p^b$ is determined by the presentation
\begin{align}\label{APRE}
G=\la x,y\mid x^{p^{b}}=y^{p^{a+c}}=[x,y]=1, y^{p^a}=x^{ep^{b-c}}\ra,
\end{align}
where
 \begin{align}\label{(c,e)cond}
0\leq c\leq b-a\quad\text{and}\quad e\in\Z_{p^{c}}^*.
\end{align}
Moreover, up to  the duality swapping the the black and white vertices, the isomorphism classes of regular dessins $\A$ with $\Aut(\A)\cong \Z_{p^a}\oplus\Z_{p^b}$ are in one-to-one correspondence with the integer pairs $(c,e)$ satisfying {\rm(\ref{(c,e)cond})}.
\end{theorem}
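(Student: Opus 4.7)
The plan is to pass from a generating pair $(x, y)$ of $G \cong \Z_{p^a}\oplus \Z_{p^b}$ to the structural data encoded in the intersection $K = \la x\ra\cap \la y\ra$, and to show that this data is precisely the parameter $(c,e)$. Since $G$ is abelian, $|G| = o(x)\,o(y)/|K|$. Writing $|K| = p^c$ and using $o(x) = p^b$, this forces $o(y) = p^{a+c}$, while the requirement that $G$ have exponent $p^b$ yields $c \leq b - a$. Because $K$ is a subgroup of order $p^c$ in each of the cyclic $p$-groups $\la x\ra$ and $\la y\ra$, uniqueness of subgroups in a cyclic $p$-group gives $K = \la x^{p^{b-c}}\ra = \la y^{p^a}\ra$, so $y^{p^a} = x^{e p^{b-c}}$ for a unique $e \in \Z_{p^c}^{*}$. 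Together with $x^{p^b} = y^{p^{a+c}} = [x,y] = 1$, this yields the presentation (\ref{APRE}).

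Next I would verify that (\ref{APRE}) actually presents $\Z_{p^a}\oplus \Z_{p^b}$. Let $H$ denote the abstract group with that presentation. A normal-form argument using $y^{p^a} = x^{ep^{b-c}}$ to reduce every power of $y$ at or above $p^a$ shows that each element of $H$ has the form $x^r y^s$ with $0 \leq r < p^b$ and $0 \leq s < p^a$; hence $|H| \leq p^{a+b}$. For the matching lower bound I would produce explicit elements of $\Z_{p^a}\oplus\Z_{p^b}$ satisfying the relations and generating the group---for instance, in additive notation, $X = (0,1)$ and $Y = (1, e p^{b-a-c})$, the exponent being non-negative precisely because $c \leq b-a$. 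This gives a surjection $H \twoheadrightarrow G$, and hence $|H| = p^{a+b}$ and $H \cong G$.

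For the bijection I would invoke Theorem~\ref{CORR}, which converts the problem to counting $\Aut(G)$-orbits of generating pairs of $G$. Both $c$ and $e$ are orbit invariants: $c$ because $K$ is preserved by any automorphism of $G$, and $e$ because the defining relation $y^{p^a} = x^{ep^{b-c}}$ transforms equivariantly. Conversely, if two generating pairs $(x_i, y_i)$ with $o(x_i) = p^b$ yield the same $(c,e)$, both satisfy (\ref{APRE}); since this is a presentation of $G$, the assignment $x_1 \mapsto x_2$, $y_1 \mapsto y_2$ extends to a surjective endomorphism of $G$, hence an automorphism by finiteness, placing the two pairs in the same $\Aut(G)$-orbit.

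Finally I would address the role of duality. Any generating pair of $G$ has at least one element of order $p^b$, so by composing with the duality $(x,y)\mapsto(y,x)$ if necessary we may always normalize to $o(x) = p^b$; this is how the parameters $(c, e)$ cover every dessin. The subtlest point I anticipate is the boundary case $c = b - a$, where $o(x) = o(y) = p^b$: here the dual pair $(y, x)$ also meets the normalization and yields the parameter $(c, e^{-1})$, so the clause "up to duality" needs to be interpreted carefully---either as identifying $(c, e)$ with $(c, e^{-1})$ in that case, or as deliberately catalogueing them separately because they correspond to distinct (though dual) isomorphism classes---and I would resolve this against the theorem's intended counting.
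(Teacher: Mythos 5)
Your proposal is correct and follows essentially the same route as the paper: derive $o(y)=p^{a+c}$ and the relation $y^{p^a}=x^{ep^{b-c}}$ from the order formula $|G|=|X||Y|/|X\cap Y|$ applied to $X=\la x\ra$, $Y=\la y\ra$, then read off the isomorphism classes via Theorem~\ref{CORR}. You supply details the paper leaves implicit — verifying that \eqref{APRE} really presents $\Z_{p^a}\oplus\Z_{p^b}$, and flagging the boundary case $c=b-a$ where duality identifies $(c,e)$ with $(c,e^{-1})$ — and your resolution of the latter agrees with how the paper handles it in Corollary~\ref{ANUM1}.
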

\begin{proof}
Let $X=\la x\ra$ and $Y=\la y\ra$. Then $G=XY$. Note that $\exp(G)=p^b$. Since $G$ is abelian and $G=\la x,y\ra$, we have either $o(x)=p^b$ or $o(y)=p^b$. Up to the duality swapping the the black and white vertices, we may assume $o(x)=p^b$. Let $o(y)=p^n$, $n\leq b$. We have
\[p^{a+b}=|G|=|XY|=|X||Y|/|X\cap Y|=p^{b+n}/|X\cap Y|.\]
It follows that $n\geq a$. Assume $n=a+c$ where $0\leq c\leq b-a$. Then $|X\cap Y|=p^c$. Hence, there exists some integer $e\in\Z_{p^c}^*$ such that $y^{p^a}=x^{ep^{b-c}}$. Since $G$ is abelian, it has the presentation~\eqref{APRE}. Moreover, for fixed $a$, $b$ and prime $p$, it is evident that two such dessins corresponding to the pairs $(c_i,e_i)$ $(i=1,2)$ are isomorphic if and only if $c_1=c_2$ and $e_1\equiv e_2\pmod{p^{c_1}}$.
\end{proof}
Let $\A(p;a,b,c,e)$ denote the regular dessin from Theorem~\ref{ACLASS} corresponding to the pair $(c,e)$.
\begin{corollary}
For fixed nonnegative integers $a\leq b$ and prime $p$, under the generalised Wilson's operation, two regular abelian dessins $\A(p;a,b,c_i,e_i)$ $(i=1,2)$ belong to the same orbit if and only if $c_1=c_2$. In particular, each such dessin is invariant under the Wilson’s operation.

\end{corollary}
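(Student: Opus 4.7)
The plan is to perform a direct computation using the presentation~\eqref{APRE}. First, I would record the effect of the generalised Wilson operation $H_{i,j}$ on the algebraic dessin $\A(p;a,b,c,e)=(G,x,y)$: it sends $(G,x,y)$ to $(G,x^i,y^j)$, where admissibility requires $\gcd(i,p^b)=\gcd(j,p^{a+c})=1$, i.e., both $i$ and $j$ are coprime to $p$. Consequently $o(x^i)=p^b$ and $o(y^j)=p^{a+c}$, so the resulting dessin has the same parameter triple $(a,b,c)$ and respects the convention of Theorem~\ref{ACLASS} that the first generator carries the maximal order.

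Next, I would compute how $e$ transforms. Using the defining relation $y^{p^a}=x^{ep^{b-c}}$ from~\eqref{APRE},
\[
(y^j)^{p^a}=x^{jep^{b-c}}=(x^i)^{i^{-1}jep^{b-c}},
\]
where $i^{-1}$ denotes the inverse of $i$ modulo $p^b$. Matching this against the new relation $(y^j)^{p^a}=(x^i)^{e'p^{b-c}}$, and using that $x^{p^{b-c}}$ has order $p^c$, the transformation rule reads
\[
e'\equiv i^{-1}je\pmod{p^c}.
\]
Thus $H_{i,j}$ sends $\A(p;a,b,c,e)$ to $\A(p;a,b,c,e')$, and in particular $c$ is invariant on the whole $\Omega$-orbit.

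To conclude the first claim, I would observe that as $i$ and $j$ range over integers coprime to $p$, their reductions modulo $p^c$ range independently over all of $\Z_{p^c}^*$; any prescribed class in $\Z_{p^c}^*$ lifts to an element of $\Z_{p^b}^*$, respectively $\Z_{p^{a+c}}^*$, by Lemma~\ref{LIFT}. Hence $i^{-1}j\pmod{p^c}$ realises every element of $\Z_{p^c}^*$, so the $H_{i,j}$-orbit of $\A(p;a,b,c,e)$ is exactly $\{\A(p;a,b,c,e'):e'\in\Z_{p^c}^*\}$; combined with Theorem~\ref{ACLASS}, this proves that two such dessins lie in the same orbit if and only if $c_1=c_2$. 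For the Wilson operation $H_j=H_{j,j}$, substituting $i=j$ collapses $i^{-1}je$ to $e$, so $\A(p;a,b,c,e)$ is fixed, proving the second claim.

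I do not foresee a genuine conceptual obstacle here; the only care required is the indexing bookkeeping, namely keeping the admissibility ranges (modulo $p^b$ and $p^{a+c}$) distinct from the modulus $p^c$ that governs the invariant $e$, and invoking Lemma~\ref{LIFT} to confirm that no congruence obstructions restrict $i^{-1}j\pmod{p^c}$.
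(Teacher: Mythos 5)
Your proposal is correct and follows essentially the same route as the paper's own proof: both compute that $H_{i,j}$ sends $\A(p;a,b,c,e)$ to $\A(p;a,b,c,i^{-1}je)$ and read off the two claims from this transformation rule. You merely make explicit two steps the paper leaves implicit, namely the verification $(y^j)^{p^a}=(x^i)^{i^{-1}jep^{b-c}}$ and the observation that $i^{-1}j$ realises every class in $\Z_{p^c}^*$, which is a welcome addition rather than a deviation.
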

\begin{proof}
Recall that the generalised Wilson's operation $H_{i,j}$ replaces $(x,y)$ with $(x^i,y^j)$ where $i$ and $j$ are coprime to $p$. By the presentation \eqref{APRE}, we have $H_{i,j}(\A(p;a,b,c,e))\cong \A(p;a,b,c,i^{-1}je)$ where $i^{-1}$ is the modular inverse of $i$ in $\Z_{p^b}$. Therefore, under the operation $H_{i,j}$ two dessins $\A(p;a,b,c_i,e_i)$ $(i=1,2)$ belong to the same orbit if and only if $c_1=c_2$. Since $H_j=H_{j,j}$, under the Wilson's operation $H_j$ each such dessin is invariant.\end{proof}

The next corollary summarises some properties of the abelian $p$-dessins.
\begin{corollary} \label{ACLASS2}
Let $\A=\A(p;a,b,c,e)$ denote the regular $p$-dessin defined by the presentation~\eqref{APRE}, and let $d=\gcd(p^{b-a},1+ep^{b-a-c})$.
Then
\begin{itemize}
\item[\rm(i)] $\A$ is reflexible,
\item[\rm(ii)] the type of $\A$ is $(p^b,p^{a+c},p^{b}/d)$,
\item[\rm(iii)] the genus of $\A$ is  $\frac{1}{2}(1+p^a(p^b-p^{b-a-c}-d-1))$,
\item[\rm(iv)]  the underlying graph of $\A$ is the complete bipartite graph $K_{p^{b-c},p^a}^{(p^c)}$ of multiplicity $p^c$,
\item[\rm(v)] $\A$ is symmetric if and only if $c=b-a$ and $e^2\equiv1\pmod{p^c}$, in which case, its underlying graph is $K_{p^a,p^a}^{(p^{b-a})}$.
\end{itemize}
\end{corollary}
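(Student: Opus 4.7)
My plan is to verify the five assertions directly from the presentation~(\ref{APRE}), using the abelian structure of $G$ together with the structural results already established in the paper. For (i), the inversion map $g\mapsto g^{-1}$ is automatically an automorphism of any abelian group, so it realises the automorphism $\iota$ required to make $(x,y)$ invertible; hence $\A$ is reflexible. For (iv), since $G$ is abelian and generated by $x$ and $y$ we have $G=\la x\ra\la y\ra$, so every coset $g\la x\ra$ intersects every coset $h\la y\ra$, making the underlying simple bipartite graph complete with parts of sizes $|G|/o(x)=p^a$ and $|G|/o(y)=p^{b-c}$. The edge multiplicity equals $|K|=p^c$, where $K=\la x\ra\cap\la y\ra$ is the core identified in the discussion preceding Proposition~\ref{QUOT}, yielding the graph $K_{p^{b-c},p^a}^{(p^c)}$.

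For (ii), one reads off $o(x)=p^b$ and $o(y)=p^{a+c}$ from the presentation. The decisive computation is $o(xy)$: since $G$ is abelian, $(xy)^k=1$ iff $y^k=x^{-k}$, so this element must lie in $K$. Using the relation $y^{p^a}=x^{ep^{b-c}}$ and $\gcd(e,p)=1$ one identifies $K=\la y^{p^a}\ra=\la x^{p^{b-c}}\ra$ of order $p^c$, forcing $p^a\mid k$. Writing $k=p^am$ transforms $x^ky^k=1$ into $x^{mp^a(1+ep^{b-a-c})}=1$, equivalently $p^{b-a}\mid m(1+ep^{b-a-c})$, whose least positive solution is $m=p^{b-a}/d$; hence $o(xy)=p^b/d$. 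Part (iii) then drops out by substituting the type $(p^b,p^{a+c},p^b/d)$ and $|G|=p^{a+b}$ into the Euler--Poincar\'e formula of Proposition~\ref{EPformula} and simplifying.

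For (v), Proposition~\ref{TPROPERTY} gives that symmetry forces $o(x)=o(y)$, hence $c=b-a$, and that the exponent relating $x^{l/k}$ to $y^{m/k}$ squares to $1$ modulo $|K|=p^c$. Under $c=b-a$ the relation becomes $y^{p^a}=x^{ep^a}$; raising to $e^{-1}$ gives $x^{p^a}=y^{e^{-1}p^a}$, so the parameter in Proposition~\ref{TPROPERTY} is $e^{-1}\bmod p^c$, and the symmetry condition is equivalent to $e^2\equiv 1\pmod{p^c}$. Conversely, under this congruence the swap $\tau\colon x\mapsto y,\ y\mapsto x$ preserves the single defining relation and extends to an automorphism of $G$, and (iv) specialises to $K_{p^a,p^a}^{(p^{b-a})}$. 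The principal technical obstacle will be (ii): one must carefully track the $p$-adic valuation of $1+ep^{b-a-c}$, distinguishing $c<b-a$ (where $d=1$ automatically) from $c=b-a$ (where $d$ may be any power of $p$ up to $p^c$), in order to arrive at the unified formula $o(xy)=p^b/d$.
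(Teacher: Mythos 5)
Your argument is correct and follows essentially the same route as the paper's proof: reflexibility from inversion being an automorphism of the abelian group $G$, the type from evaluating $(xy)^{p^a}=x^{p^a(1+ep^{b-a-c})}$ via the relation $y^{p^a}=x^{ep^{b-c}}$, the genus from Proposition~\ref{EPformula}, the underlying graph from the coset/core structure with multiplicity $|\la x\ra\cap\la y\ra|=p^c$, and part (v) from Proposition~\ref{TPROPERTY} together with a direct check that the swap preserves the defining relation when $e^2\equiv1\pmod{p^c}$. The only cosmetic differences are that the paper identifies the graph by passing to the shadow dessin and the standard embedding of $K_{p^{b-c},p^a}$ rather than by your direct coset-intersection count, and that your more careful derivation of $o(xy)=p^b/d$ (reducing to $p^{b-a}\mid m(1+ep^{b-a-c})$) fills in a step the paper leaves implicit.
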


\begin{proof}
By replacing the generators $x$ and $y$ with  $x^{-1}$ and $y^{-1}$ we get an alternative presentation of $G$:
\begin{align}\label{APRE2}
G=\la x^{-1},y^{-1}\mid x^{-p^{b}}=y^{-p^{a+c}}=[x^{-1},y^{-1}]=1, y^{-p^a}=x^{-ep^{b-c}}\ra.
\end{align}
Comparing \eqref{APRE} with \eqref{APRE2} we see that $x\mapsto x^{-1}, y\mapsto y^{-1}$ extends to an automorphism of $G$. Hence the dessin $\A$ is reflexible. To determine the type, we need to calculate the order of $xy$. Since $y^{p^a}=x^{ep^{b-c}}$, we have \[(xy)^{p^a}=x^{p^a}y^{p^a}=x^{p^a}x^{ep^{b-c}}=x^{p^a(1+ep^{b-a-c})}.\] Hence, $o(xy)=p^b/\gcd(p^{b-a},1+ep^{b-a-c}).$ Therefore we obtain the type as stated in (ii). The genus follows from Proposition~\ref{EPformula}.

Further, it follows from the presentation~\eqref{APRE} that the shadow dessin of $\A$ is given by a disjoint abelian generating pair giving rise to the standard embedding of $K_{p^{b-c},p^a}$. Hence $\A$ has underlying graph isomorphic to $K_{p^{b-c},p^a}^{(p^c)}$.

Finally, if $\A$ is symmetric, then $o(x)=o(y)=p^b$. Hence, $c=b-a$. By Proposition~\ref{TPROPERTY}, $e^2\equiv1\pmod{p^{c}}$. Conversely, if the numerical conditions are satisfied, then $x\mapsto y, y\mapsto x$ extends to an automorphism of $G$, and hence the dessin $\A$ is symmetric. By (iv), the underlying graph is $K_{p^{a},p^a}^{(p^{b-a})}$.\end{proof}

The following corollary enumerates the number of isomorphism classes of regular abelian $p$-dessins.
\begin{corollary}\label{ANUM1}
Up to isomorphism, the number of regular dessins $\A$ with $\Aut(\A)\cong \Z_{p^a}\oplus\Z_{p^b}$ where $0\leq a\leq b$ is equal to $\psi(p^{b-a})$ where $\psi$ is the Dedekind's totient function.
\end{corollary}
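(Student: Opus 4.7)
The plan is to count the isomorphism classes via Theorem~\ref{ACLASS} combined with a two-set inclusion-exclusion. By Theorem~\ref{ACLASS}, under the normalization $o(x)=p^b$, isomorphism classes of abelian regular dessins $\A$ with $\Aut(\A)\cong\Z_{p^a}\oplus\Z_{p^b}$ correspond bijectively to pairs $(c,e)$ with $0\leq c\leq b-a$ and $e\in\Z_{p^c}^*$. Using the convention $|\Z_1^*|=1$ and the telescoping identity $\sum_{c=1}^{k}(p^c-p^{c-1})=p^k-1$, the cardinality of this parameter set is
\[
N \;=\; 1+\sum_{c=1}^{b-a}\phi(p^c) \;=\; p^{b-a}.
\]

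Let $A$ denote the set of isomorphism classes admitting a representative with $o(x)=p^b$ and $B$ the analogous set for $o(y)=p^b$. Theorem~\ref{ACLASS} gives $|A|=N$, and the duality involution $[(G,x,y)]\mapsto [(G,y,x)]$ restricts to a bijection $A\to B$, so $|B|=N$ as well. Since $G$ is abelian and generated by $x$ and $y$, we have $G=\la x\ra+\la y\ra$, whence $\exp(G)\mid\lcm(o(x),o(y))$; because $\exp(G)=p^b$ and both $o(x),o(y)$ are $p$-powers, this forces $\max(o(x),o(y))=p^b$, and every isomorphism class therefore lies in $A\cup B$. The intersection $A\cap B$ consists of isomorphism classes with $o(x)=o(y)=p^b$, which in the parametrization of Theorem~\ref{ACLASS} correspond to pairs satisfying $p^{a+c}=p^b$, i.e.\ $c=b-a$; hence $|A\cap B|=|\Z_{p^{b-a}}^*|=\phi(p^{b-a})$.

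Inclusion-exclusion then gives the total number of isomorphism classes as
\[
|A\cup B| \;=\; 2N-\phi(p^{b-a}) \;=\; 2p^{b-a}-\phi(p^{b-a}).
\]
For $b>a$ this simplifies to $p^{b-a}+p^{b-a-1}=p^{b-a-1}(p+1)=\psi(p^{b-a})$, and for $b=a$ it equals $2-1=1=\psi(1)$, matching the claim in both regimes. The only step requiring care is the identity $|B|=N$, which is immediate once one observes that duality is a colour-swapping involution on isomorphism classes and therefore transports the condition $o(x)=p^b$ to $o(y)=p^b$; the rest is elementary arithmetic with $\phi(p^k)=p^k-p^{k-1}$ and $\psi(p^k)=p^k+p^{k-1}$.
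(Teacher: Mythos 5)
Your proof is correct and follows essentially the same route as the paper: both count the classes with $o(x)=p^b$ via the $(c,e)$ parametrization of Theorem~\ref{ACLASS}, use the colour-swapping duality to count those with $o(y)=p^b$, identify the overlap as the $c=b-a$ classes of cardinality $\varphi(p^{b-a})$, and finish by inclusion-exclusion. No gaps; the only stylistic difference is that you make the set-theoretic bookkeeping ($A$, $B$, $A\cap B$) and the fact that $\max(o(x),o(y))=p^b$ explicit, which the paper leaves implicit.
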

\begin{proof}Let $G=\la x,y\ra\cong \Z_{p^a}\oplus\Z_{p^b}$ where $0\leq a\leq b$. By Theorem~\ref{ACLASS}, the number of regular dessins $\A=(G,x,y)$ with $o(x)=p^b$ is equal to $\sum_{c=0}^{b-a}\varphi(p^c)$. By the duality swapping the vertex colours, the number of regular dessins $\mathcal{B}=(G,x,y)$ with $o(y)=p^b$ is also equal to $\sum_{c=0}^{b-a}\varphi(p^c)$. Note that $\A\not\cong\mathcal{B}$ except when $o(x)=o(y)=p^b$, in which case, each dessin $\A$ corresponding to the pair $(b-a,e)$ is isomorphic to the dessin $\mathcal{B}$ corresponding to the pair $(b-a,f)$ where $ef\equiv 1\pmod{p^{b-a}}$. Therefore, by the Inclusion-Exclusion Principle, the total number of regular dessins whose automorphism groups are isomorphic to $\Z_{p^a}\oplus\Z_{p^b}$ is equal to $2\big(\sum_{c=0}^{b-a}\varphi(p^c)\big)-\varphi(p^{b-a})$. If $a=b$, then this is equal to $1=\psi(1)$. Otherwise, we have $b>a\geq 0$, and hence
\begin{align*}
2\sum_{c=0}^{b-a}\varphi(p^c)-\varphi(p^{b-a})&=2(1+\sum_{c=1}^{b-a}(p^c-p^{c-1}))-p^{b-a}+p^{b-a-1}\\
&=p^{b-a}+p^{b-a-1}=\psi(p^{b-a}),
\end{align*}
as claimed.
\end{proof}

The following theorem enumerates the abelian (including cyclic) dessins.
\begin{theorem}\label{ANUMBER}
\label{mainenum}
Let $1\leq n\leq m$ be integers such that $n|m$. Then
\begin{enumerate}
\item[\rm(i)]the number of non-isomorphic regular dessins $\A$ with $\Aut(\A)\cong \Z_{n}\oplus\Z_{m}$ is equal to $\psi(m/n)$, where $\psi$ is the Dedekind totient function,

\item[\rm(ii)]the number of symmetric dessins $\A$ with $\Aut(\A)\cong \Z_{n}\oplus\Z_{m}$ is equal to the number of solutions to the equation $e^2\equiv1\pmod{m/n}$ in $\Z_{m/n}$,

\item[\rm(iii)]the group $G\cong \Z_{n}\oplus\Z_{m}$ underlies a unique regular dessin if and only if $m=n$, in which case the dessin is totally symmetric.
\end{enumerate}
\end{theorem}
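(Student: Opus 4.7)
The plan is to reduce each part to its $p$-primary content via the Decomposition Theorem~\ref{REDUCTION} and then reassemble by multiplicativity. Write $n=\prod_i p_i^{a_i}$ and $m=\prod_i p_i^{b_i}$ with $a_i\le b_i$, so that $m/n=\prod_i p_i^{c_i}$ where $c_i=b_i-a_i$. Then $\Z_n\oplus\Z_m\cong\prod_i(\Z_{p_i^{a_i}}\oplus\Z_{p_i^{b_i}})$, and Theorem~\ref{REDUCTION} yields a bijection between regular dessins $\A$ with $\Aut(\A)\cong\Z_n\oplus\Z_m$ and tuples of regular $p_i$-dessins $\A_i$ with $\Aut(\A_i)\cong\Z_{p_i^{a_i}}\oplus\Z_{p_i^{b_i}}$, compatible with external symmetries.

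For (i), I would apply Corollary~\ref{ANUM1} to each factor, obtaining $\psi(p_i^{c_i})$ isomorphism classes per prime, and conclude by multiplicativity of Dedekind's totient: $\prod_i\psi(p_i^{c_i})=\psi(m/n)$. For (ii), the \emph{in particular} clause of Theorem~\ref{REDUCTION} says $\A$ is symmetric if and only if every component $\A_i$ is. Combined with Corollary~\ref{ACLASS2}(v), which identifies symmetric abelian regular $p$-dessins (with $\Aut\cong\Z_{p^a}\oplus\Z_{p^b}$) with solutions of $e^2\equiv 1\pmod{p^{b-a}}$, the total count becomes $\prod_i\#\{e_i\in\Z_{p_i^{c_i}}:e_i^2\equiv 1\pmod{p_i^{c_i}}\}$; the CRT isomorphism $\Z_{m/n}\cong\prod_i\Z_{p_i^{c_i}}$ then re-expresses this product as the number of $e\in\Z_{m/n}$ with $e^2\equiv 1\pmod{m/n}$.

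For (iii), I would first note that $\psi(1)=1$ while $\psi(k)\ge k\ge 2$ for $k>1$, so part (i) gives the equivalence \emph{$G$ underlies a unique regular dessin if and only if $m=n$}. Total symmetry of this unique dessin comes for free: because dessin operations preserve the automorphism group of a regular dessin, every $\sigma\in\Aut(\Delta)$ sends $\A$ to another regular dessin with the same automorphism group $\Z_n\oplus\Z_n$, which by uniqueness must be $\A$ itself. The only subtlety in the whole argument is ensuring that the $p$-local bijection from Theorem~\ref{REDUCTION} genuinely respects isomorphism \emph{and} the symmetric property; this is secured by the statement of the Decomposition Theorem together with Proposition~\ref{COPR}, which confirms that components of coprime order combine into a true direct product. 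Beyond this bookkeeping, the proof is routine.
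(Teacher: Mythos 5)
Your proposal is correct and follows essentially the same route as the paper: prime-power decomposition of $\Z_n\oplus\Z_m$, the Decomposition Theorem~\ref{REDUCTION} together with Corollary~\ref{ANUM1} and multiplicativity of $\psi$ for (i), Corollary~\ref{ACLASS2}(v) with CRT for (ii), and $\psi(m/n)=1\iff m=n$ for (iii). Your explicit justification of the \emph{totally symmetric} clause in (iii) -- that dessin operations preserve the automorphism group, so uniqueness forces invariance under all of $\Aut(\Delta)$ -- is a correct argument that the paper leaves implicit.
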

\begin{proof}
Let $n$ and $m$  have the prime decompositions
 $n=p_1^{a_1}\cdots p_{l}^{a_l}$ and $m=p_1^{b_1}\cdots p_l^{b_l}q_1^{c_1}\cdots q_t^{c_t}$ where $1\leq a_i\leq b_i$ for $1\leq i\leq l$ and $c_j\geq1$ for $1\leq j\leq t$.
By the Fundamental Theorem of Finite Abelian Groups,
\[\Z_n\oplus\Z_m\cong \bigoplus_{i=1}^l(\Z_{p_i^{a_i}}\oplus\Z_{p_i^{b_i}})\bigoplus_{j=1}^t\Z_{q_j^{c_j}}.
\]
By the Decomposition Theorem~\ref{REDUCTION} and Corollary~\ref{ANUM1}, the total number of non-isomorphic regular dessins is equal to the product $\prod_{i=1}^l\psi(p^{b_i-a_i})\prod_{j=1}^t\psi(q^{c_j})=\psi(m/n)$, as it was required.

Item (ii) follows from Corollary~\ref{ACLASS2}(v), and Item (iii) follows from the fact that $\psi(m/n)=1$ if and only if $m/n=1$.\end{proof}

\begin{example}The group $C_6=\la g|\ g^6=1\ra$ underlies $\psi(6)=12$ non-isomorphic regular dessins $\C_i$ $(1\leq i\leq 12)$ given below.
\[
\begin{array}{llllll}
\text{Dessin}&(g^r,g^s) & \text{Type} & \text{Genus} & \text{Graph} & \text{Symmetric}\\
\hline
\C_1 & (g^0,g^1) & (1,6,6) & 0  & K_{6,1} & \text{No}\\
\C_2 & (g^1,g^0) & (6,1,6) & 0  & K_{1,6} & \text{No}\\
\C_3 & (g^1,g^5) & (6,6,1) & 0  & K_{1,1}^{(6)} & \text{Yes}\\
\C_4 & (g^1,g^2) & (6,3,2) & 1  & K_{1,2}^{(3)} & \text{No}\\
\C_5 & (g^1,g^3) & (6,2,3) &  1 & K_{1,3}^{(2)} & \text{No}\\
\C_6 & (g^2,g^1) & (3,6,2) & 1  & K_{2,1}^{(3)} & \text{No}\\
\C_7 & (g^2,g^3) & (3,2,6) & 1  & K_{2,3} & \text{No}\\
\C_{8} & (g^3,g^1) & (2,6,3) & 1  & K_{3,1}^{(2)} & \text{No}\\
\C_{9} & (g^3,g^2) & (2,3,6) & 1  & K_{3,2} & \text{No}\\
\C_{10} & (g^1,g^1) & (6,6,3) &  2 & K_{1,1}^{(6)} & \text{Yes}\\
\C_{11} & (g^2,g^5) & (3,6,6) & 2  & K_{2,1}^{(3)} & \text{No}\\
\C_{12} & (g^1,g^4) & (6,3,6) & 2  & K_{1,2}^{(3)} & \text{No}\\
\hline
\end{array}
\]
 The dessins form 3 orbits under the duality operations:
\[
\{\C_1,\C_2,\C_3\},\{\C_4,\C_5,\C_6,\C_7,\C_8,\C_9\},\{\C_{10},\C_{11},\C_{12}\}.
\]
They form 9 orbits  under the generalised Wilson's operation:
\[
\{\C_1\},\{\C_2\}, \{\C_{3},\C_{10}\}, \{\C_4,\C_{12}\},\{\C_5\},\{\C_6,\C_{11}\},\{\C_7\},\{\C_8\},\{\C_9\}.
\]
\end{example}

\begin{remark} The function $\psi(m)=m\Pi_{p|m}(1+\frac{1}{p})$ counting the number of cyclic dessins
introduced above is the well-known  Dedekind totient function. It was introduced by Dedekind in connection with modular forms. It appears in many other contexts, for instance in the formula describing the generating function of the Riemann zeta function. Similar to the Euler totient function it can be generalized to higher degree functions by setting $\psi_k(n)=\frac{J_{2k}(n)}{J_k(n)}$, where $J_k(n)$ is the Jordan totient function. The formula
for the number of cyclic dessins appeared explicitly in \cite[Eample 3.1]{Jones2013} derived from the Hall's counting principle \cite{Hall1936}.
\end{remark}

\section{Nilpotent regular maps and nilpotent symmetric dessins}
In this section, we investigate the relationship between symmetric dessins and regular bipartite maps.

 Let $\D=(G,x,y)$ be a symmetric dessin of type $(m,m,n)$ with an automorphism $\tau$ of $G$ transposing $x$ and $y$. If $x\neq y$, then $\tau\neq1$. The \textit{extended automorphism group} $\Aut^+(\D)$ of $\D$ is the group $\la\Aut(\D),\tau\ra=\Aut(\D)\rtimes\la \tau\ra.$ On the other hand, if $x=y$, then $\tau=1$. In this special case, the extended automorphism group of $\D$ is defined to be
\[\la x,L\ |\ x^{m}=L^2=[x,L]=1\ra\cong\Z_m\oplus\Z_2,\] corresponding to a regular embedding of an $m$-dipole. It is clear that the extended automorphism group of a symmetric dessin corresponds to the orientation-preserving automorphism group of the associated regular bipartite map~\cite{JNS2008}.

Regular maps with nilpotent automorphism groups are studied in~\cite{MNS2012}. The authors showed that
\begin{proposition}\cite[Theorem 3.4]{MNS2012}\label{NILBIPART}
Every regular map $\M$ with a non-abelian nilpotent automorphism group is bipartite.
\end{proposition}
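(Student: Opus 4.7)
The plan is to establish bipartiteness by exhibiting an index-$2$ subgroup of $\Aut^+(\M)$ that contains the vertex rotation but not the edge-reversing involution. For an orientable regular map, one has $\Aut^+(\M)=G=\langle R,L\rangle$, where $R$ is the rotation around a fixed vertex $v$ and $L$ is the involution reversing a dart incident with $v$. The map $\M$ is bipartite if and only if there is a subgroup $K\leq G$ of index $2$ with $R\in K$ and $L\notin K$: the $\langle R\rangle$-cosets (the vertices) are then partitioned by the two cosets of $K$, and $L$ swaps the two parts, yielding a proper $2$-colouring. Hence it suffices to construct an epimorphism $\phi\colon G\to\Z_2$ with $\phi(R)=0$ and $\phi(L)=1$.

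Next I would invoke the nilpotent decomposition of Proposition~\ref{NILP}(i) to write $G=P\times H$ with $P$ the Sylow $2$-subgroup of $G$ and $H$ of odd order. Since $L$ is an involution, $L\in P$. Decomposing $R=R_PR_H$ with $R_P\in P$ and $R_H\in H$, and projecting the equation $\langle R,L\rangle =G$ onto each factor, gives $P=\langle R_P,L\rangle$ and $H=\langle R_H\rangle$. In particular $H$ is cyclic, hence abelian, so the hypothesis that $G$ is non-abelian forces $P$ to be non-abelian.

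Finally, being non-abelian, $P$ is non-cyclic, so the Frattini quotient $P/\Phi(P)$ has $\Z_2$-rank at least $2$; combined with $P$ being $2$-generated, the rank is exactly $2$, and $\{R_P\Phi(P),\,L\Phi(P)\}$ is a basis of $P/\Phi(P)\cong\Z_2\oplus\Z_2$. The $\Z_2$-linear map sending $R_P\Phi(P)\mapsto 0$ and $L\Phi(P)\mapsto 1$, composed with the Frattini projection $P\to P/\Phi(P)$ and extended trivially on $H$, defines the required homomorphism $\phi\colon G\to\Z_2$. Its kernel is the index-$2$ subgroup witnessing the bipartiteness of $\M$. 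No serious obstacle arises; the substantive ingredients are merely the Sylow decomposition of a nilpotent group and the fact that a $2$-generated non-abelian $p$-group has Frattini quotient of rank exactly $2$. The one delicate point is the bipartiteness criterion in terms of an index-$2$ subgroup containing $R$ but not $L$, which rests on the standard identification of vertices with $\langle R\rangle$-cosets in the regular action of $G$ on darts.
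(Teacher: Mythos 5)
Note first that the paper does not prove this proposition at all: it is imported verbatim as \cite[Theorem 3.4]{MNS2012}, so there is no internal proof to compare against. Your argument is correct and self-contained. The bipartiteness criterion (an index-$2$ subgroup $K$ with $\la R\ra\leq K$ and $L\notin K$) is the standard one; the Sylow decomposition $G=P\times H$ with $H=\la R_H\ra$ cyclic correctly forces $P=\la R_P,L\ra$ to be non-abelian, hence non-cyclic, hence $P/\Phi(P)\cong\Z_2\oplus\Z_2$ with $\{R_P\Phi(P),L\Phi(P)\}$ a basis (in particular $L\notin\Phi(P)$, since otherwise $P=\la R_P\ra$ would be cyclic), and the resulting epimorphism $G\to\Z_2$ killing $R$ but not $L$ does the job. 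This is essentially the argument of Malni\v{c}--Nedela--\v{S}koviera, phrased via the Frattini quotient of the Sylow $2$-subgroup rather than via the normal closure of $R$; the two are interchangeable here, and your version has the small merit of making explicit why the non-abelian hypothesis is needed (an abelian $G$ can have cyclic $P$, in which case no such index-$2$ subgroup need exist).
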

It follows that each non-abelian nilpotent regular map $\M$ gives rise to a nilpotent symmetric dessin $\D$. The following  result strengthens the above statement.
\begin{theorem}\label{CLASSREDUCE}
Every regular map $\M$ with a nilpotent automorphism group of class $c\geq2$ gives rise to a nilpotent symmetric dessin $\D$ with the automorphism group of class at most $c-1$.
\end{theorem}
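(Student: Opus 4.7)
The plan has three stages: first, realise the symmetric dessin $\D$ inside the regular map $\M$ and describe $\Aut(\D)$ as an index-two subgroup of $G := \Aut^+(\M)$ generated by two $G$-conjugate elements; second, prove a short group-theoretic lemma forcing the nilpotency class to drop by at least one when passing to such a subgroup; third, combine. Since $c\geq 2$, the group $G$ is non-abelian, so Proposition~\ref{NILBIPART} tells us that $\M$ is bipartite, and the associated symmetric dessin $\D := \D(\M)$ is defined. In the standard presentation $G = \langle R, S\rangle$, where $R$ is a vertex-rotation and $S$ the edge-flip involution, bipartiteness means that $S$ interchanges the two colour classes; the colour-preserving index-two subgroup $H := \Aut(\D)$ is therefore generated by $x := R$ and $y := SRS^{-1} = x^{S^{-1}}$. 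Note that $x\neq y$, since otherwise $R$ and $S$ would commute and $G$ would be abelian, contradicting $c\geq 2$.

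The central step is the following lemma: \emph{if $G$ is nilpotent of class $c$ and $H = \langle x, x^g\rangle$ for some $x, g\in G$, then $c(H)\leq c-1$.} I plan to prove it by writing $y := x^g = x[x,g]$ and applying $[x,x]=1$ together with the commutator identity $[a,bc] = [a,c][a,b]^c$ to obtain
\[
[x,y] \;=\; [x,\, x[x,g]] \;=\; [x,[x,g]] \;\in\; \gamma_3(G).
\]
Since $H$ is $2$-generated, its commutator subgroup $\gamma_2(H) = [H,H]$ coincides with the normal closure in $H$ of the single element $[x,y]$, which is therefore contained in $\gamma_3(G)$ (normal in $G$, hence in $H$). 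A routine induction on $i$ via $\gamma_{i+1}(H) = [\gamma_i(H), H] \leq [\gamma_{i+1}(G), G] = \gamma_{i+2}(G)$ extends this to $\gamma_i(H)\leq \gamma_{i+1}(G)$ for all $i\geq 2$; taking $i=c$ yields $\gamma_c(H)\leq \gamma_{c+1}(G) = 1$.

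Applying the lemma to $G$ and $H = \Aut(\D)$ then finishes the argument. The main obstacle --- indeed the whole substance of the proof --- is the identity $[x, x^g] = [x, [x, g]]$, which shows that the commutator of $x$ with any $G$-conjugate of itself is automatically a triple commutator, living one level deeper in the lower central series of $G$ than a generic commutator; the induction then propagates this one-step gain to every level of the lower central series of $H$. All the other ingredients (bipartiteness via Proposition~\ref{NILBIPART}, the concrete identification of the colour-preserving subgroup as $\langle R, SRS^{-1}\rangle$, and the $2$-generator fact that $[H,H]$ equals the normal closure of $[x,y]$) are routine.
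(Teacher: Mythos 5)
Your proposal is correct and follows essentially the same route as the paper: both hinge on the identity $[R,R^L]=[R,[R,L]]\in G_3$, which places the commutator of the two dessin generators one level deeper in the lower central series of $G=\Aut^+(\M)$. The only (cosmetic) difference is in how this gain is propagated: the paper invokes the generation of $H_c$ by left-normed commutators in the generators (Proposition~\ref{NILP}(iv)), whereas you use the normal-closure description of $[H,H]$ for a $2$-generated group together with the inclusion $\gamma_{i+1}(H)=[\gamma_i(H),H]\leq[\gamma_{i+1}(G),G]=\gamma_{i+2}(G)$; both yield $\gamma_c(H)=1$.
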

\begin{proof}
Let $G=\Aut^+(\M)=\la R, L\ra$, $L^2=1$. By the assumption, $G$ is nilpotent of class $c$, $c\geq 2$. By Proposition~\ref{NILBIPART}, the map $\M$ is bipartite. Let $x=R$, $y=R^L$ and $H=\Aut_0^+(\M)$. Then $H=\la x,y\ra\cong\Aut(\D)$. Since $G$ has class $c$, the subgroup $H$ of $G$ has class at most $c$, that is, $H_{c+1}=1$. By Proposition~\ref{NILP}(iv), we have
\begin{align}\label{CLASSC}
H_c=\la [x_1,x_2,\cdots, x_c]\mid x_i\in\{x,y\}\ra.
\end{align}
 Note that
\begin{align*}
[x,y]=&[R, R^L]=R^{-1}LR^{-1}LRLRL=R^{-1}(LR^{-1}LR)R(R^{-1}LRL)\\
=&R^{-1}[R,L]^{-1}R[R,L]=[R,[R,L]]\in G_3.
\end{align*}
Hence, for any $x_i\in\{x,y\}\subseteq H$, we have
\begin{align*}
[x,y,x_3,x_4,\cdots, x_c]=[[x,y],x_3,\cdots, x_c]\in G_{c+1}=1.
\end{align*}
Since $[y,x]=[x,y]^{-1}$, we also have $[y,x,x_3,x_4,\cdots, x_c]\in G_{c+1}=1$. Therefore, by \eqref{CLASSC}, $H_{c}=1$. So $H\cong\Aut(\D)$ has class at most $c-1$.\end{proof}

As shown in \cite{MNS2012} the classification of nilpotent regular maps is reduced to the  classification
of regular maps whose automorphism groups are $2$-groups.
Regular maps $\M$ whose automorphism groups are $2$-groups of class 2 are classified  in  \cite[Theorem 5.4]{MNS2012}. The authors showed that either  $\M\cong\M_1(n)= (G_1(n),R,L)$ for some $b\geq 2$, or $\M\cong \M_2(n)=(G_2(n), R,L)$ for some $n\geq 1$, where
\begin{align*}
&G_1(n)=\la R,L\mid R^{2^n}=L^2=1, [R,L]=R^{2^{n-1}}\ra,\\
&G_2(n)=\la R,L\mid R^{2^n}=L^2=T^2=[R,T]=[L,T]=1, T:=[R,L]\ra.
\end{align*} By Theorem~\ref{NILBIPART}, the subgroups $\Aut_0^+(\M_i(n))$ $(i=1,2)$ are abelian. So they must be covered by Corollary~\ref{ACLASS2}(v). We identify them as follows.  Let $x=R$ and $y=R^L$. Using substitution, we have
\begin{align*}
&\Aut^+_0(\M_1(n))=\la x,y\mid x^{2^n}=[x,y]=1,y=x^{2^{n-1}+1}\ra,\\
&\Aut^+_0(\M_2(n))=\la x,y\mid x^{2^n}=[x,y]=1,y^2=x^2\ra.
\end{align*}
Hence $\M_1(n)\cong \mathcal{A}(2;0,n,n,2^{n-1}+1)$ and $\M_2(n)\cong\mathcal{A}(2;1,n,n-1,1)$. It follows that the classification
of regular maps with a nilpotent group of automorphisms of class two can be obtained from the classification of abelian dessins. Similar identification can be performed for regular maps $\M$ whose automorphism groups are $2$-groups of class 3 classified in \cite{BDLNS}; the interested reader is referred to \cite{Wang2014} for details.

Now we turn to the inverse of Theorem~\ref{CLASSREDUCE}. The following proposition shows that the extended automorphism group of a nilpotent symmetric dessin is not necessarily nilpotent. Recall that a finite group $G$ is \textit{supersolvable} if it has a \textit{cyclic invariant series}, that is, a series $1=N_0\leq N_1\leq N_2\leq\cdots \leq N_s=G$ of normal subgroups of $G$ such that $N_{i}/N_{i-1}$ $(i=1,2,\cdots s)$ are all cyclic.
\begin{proposition}\label{SUPER}
The extended automorphism group $G$ of a symmetric $p$-dessin is supersolvable. In particular, it is nilpotent if and only if either $p=2$, or $p>2$ and $G\cong\Z_{p^e}\oplus\Z_2$ for some $e\geq0$.
\end{proposition}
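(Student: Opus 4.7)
The plan is to handle the two assertions by splitting on whether the generating pair $(x,y)$ of the $p$-group $H = \Aut(\D)$ satisfies $x = y$ or $x \neq y$. When $x = y$, the definition of the extended automorphism group gives $G \cong \Z_{p^e} \oplus \Z_2$ directly, an abelian group (hence both supersolvable and nilpotent). So the substantive analysis is for the case $x \neq y$, where $G = H \rtimes \langle \tau \rangle$ with $\tau$ of order $2$ acting nontrivially on $H$.

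For supersolvability, the case $p = 2$ is immediate since $G$ is then a $2$-group, hence nilpotent, hence supersolvable. For $p > 2$, I would show that every $G$-chief factor is cyclic of prime order. The factor $G/H \cong \Z_2$ is one such. Any other chief factor lies in $H$: refining a chief series through the normal subgroup $H$, each such factor has the form $V = A/B$ with $B < A \leq H$ both normal in $G$, and $V$ is an elementary abelian $p$-group on which $G/B$ (hence $G$) acts irreducibly. The key observation is that $H$ must act trivially on $V$: the $p$-subgroup $H/B$ always has a nontrivial fixed subspace on an $\mathbb{F}_p$-module, and this fixed subspace is $G$-invariant because $H \normal G$, so by irreducibility it equals $V$. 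Thus the $G$-action on $V$ factors through $\langle \tau\rangle = G/H$. Since $p$ is odd, the minimal polynomial of $\tau$ divides $(t-1)(t+1)$, which has two distinct roots in $\mathbb{F}_p$, so $\tau$ acts diagonalisably; irreducibility forces $\dim_{\mathbb{F}_p} V = 1$, i.e.\ $|V| = p$. Therefore every chief factor is of prime order and $G$ is supersolvable.

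For the nilpotency criterion, the ``if'' direction is immediate: when $p=2$ the group $G$ is a $2$-group and hence nilpotent, and $\Z_{p^e}\oplus \Z_2$ is abelian. For the converse with $p > 2$ and $x\ne y$, assume $G$ is nilpotent. By Proposition~\ref{NILP}(i), $G$ decomposes as the direct product of its Sylow subgroups; since $H$ is the Sylow $p$-subgroup and $\langle \tau\rangle$ is the Sylow $2$-subgroup, we obtain $G = H \times \langle \tau \rangle$, so $\tau$ centralises $H$. In particular $\tau(x) = x$, and combining this with the symmetric-property identity $\tau(x) = y$ forces $x = y$, contradicting $x\ne y$. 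Hence $G$ is not nilpotent in that case, and the only nilpotent instances with $p > 2$ are exactly those with $x = y$, yielding $H = \langle x\rangle$ cyclic of order $p^e$ and $G \cong \Z_{p^e}\oplus \Z_2$.

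The main subtlety is the supersolvability step for odd $p$: one must trap each chief factor inside $H$ into the trivial part of the $H$-action (via the fixed-point property of $p$-groups acting on $\mathbb{F}_p$-modules combined with $G$-irreducibility) so that it becomes a $\Z_2$-module, and then use Maschke/eigenspace decomposition over $\mathbb{F}_p$ to force it to be one-dimensional. The nilpotency characterisation is then a quick consequence of the Sylow decomposition property (Proposition~\ref{NILP}(i)).
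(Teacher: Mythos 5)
Your proof is correct, and its overall architecture coincides with the paper's: split on whether $\tau=1$ (equivalently $x=y$), observe that the $\tau=1$ case and the $p=2$ case are immediate, and for the nilpotency characterisation with $p>2$ use the fact that a nilpotent group is the direct product of its Sylow subgroups, so that $\la\tau\ra$ being a direct factor forces $\tau$ to centralise $H$, hence $\tau=1$ and $x=y$. The one place where you genuinely diverge is the supersolvability step for odd $p$: the paper simply invokes the textbook fact that every group of order $2p^e$ is supersolvable (Scott, Theorem 7.2.15), whereas you reprove this from scratch by refining a chief series through the normal subgroup $H$, trapping each chief factor $V\leq H$ in $C_V(H)$ (nontrivial fixed points of a $p$-group on an $\mathbb{F}_p$-module, made $G$-invariant by normality of $H$, hence all of $V$ by irreducibility) and then diagonalising the induced action of the involution $\tau$ over $\mathbb{F}_p$ to force $|V|=p$. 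Your route is self-contained and makes visible exactly why oddness of $p$ matters (the eigenvalues $\pm1$ lie in $\mathbb{F}_p$ and are distinct); the paper's citation is shorter and keeps the proof focused on the dessin-theoretic content. Both are valid; there is no gap in your argument.
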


\begin{proof} Let $\D=(H,x,y)$ be a symmetric $p$-dessin with $\tau\in\Aut(H)$ transposing $x$ and $y$. Assume $|H|=p^e$. If $\tau\neq1$, then the extended automorphism group $G=\Aut^+(\D)=H\rtimes\la \tau\ra$ has order $2p^e$. Since every group of order $2p^e$ is supersolvable \cite[Theorem 7.2.15]{Scott1964}, $G$ is supersolvable. On the other hand, if $\tau=1$, then by the preceding convention, $G\cong\Z_{p^e}\oplus\Z_2$, which is  supersolvable.

Moreover, assume $G$ is nilpotent and $p>2$, then the Sylow $2$-group generated by the (unique) involution $L$ of $G$ induced by the automorphism $\tau$ of $H$ transposing $x$ and $y$ is a direct factor of $G$. It follows that $\tau=1$ and $x=y$. Hence, $G\cong\Z_{p^e}\oplus\Z_2$, as required.
\end{proof}

Finally, an interesting result proved in \cite{CDNS} shows that the number of vertices in a simple nilpotent regular map is bounded, provided that the class of its automorphism group is bounded. The following example shows that similar statement does not extend to nilpotent dessins. In particular, it confirms the statement on nilpotent regular maps.
\begin{example}
Let $\D$ be the simple abelian symmetric $2$-dessin $(H,x,y)$ where
\[
H=\la x,y\mid x^{2^a}=y^{2^a}=[x,y]=1\ra, \quad a\geq1.
\]
Clearly, $c(H)=1$. The number of vertices of $\D$ is equal to $2^{a+1}$, which is unbounded.

Now let $\M$ be the regular map corresponding to $\D$. Then $\Aut^+(\M)$ is isomorphic to the extended automorphism group $G=\Aut^+(\D)$ of $\D$ which is defined by the presentation
\begin{align*}
G=\la x,\tau\mid x^{2^a}=\tau^2=[x,y]=1, y:=x^\tau\ra.
\end{align*}
We show that $c(G)=a+1$. From the presentation of $G$ we see that any element of $G$ can be written as form $x^iy^i\tau^l$ where $1\leq i,j\leq 2^a$ and $0\leq l\leq 1$. It follows that $w=x^iy^j\tau^l\in Z(G)$ if and only if $w^x=w$, $w^y=w$ and $w^\tau=w$.
If $l=1$, then $w^x=(x^iy^j\tau)^x=x^{i-1}y^{j+1}\tau\neq w$. Moreover, if $l=0$, then $w^\tau=(x^iy^j)^\tau=y^ix^j$. So $w\in Z(G)$ if and only if $i=j$ and $l=0$. Hence, $Z(G)=\la xy\ra\cong\Z_{2^a}$. It follows that the group $\bar G=G/Z(G)=\la \bar x,\bar\tau\ra$  has a presentation
\begin{align*}
\bar G=\la \bar x,\bar \tau\mid \bar x^{2^a}=\bar \tau^2=1,\bar x^{\bar \tau}=\bar x^{-1}\ra\cong D_{2^{a+1}}.
\end{align*}
It is well-known that $D_{2^{a+1}}$ is $2$-group of class $a$. Hence, $c(G)=a+1$. Therefore, if $c(G)$ is bounded, so is the number $2^{a+1}$ of vertices in the regular map $\M$.
\end{example}

\section*{Acknowlegement}
The main material of the paper is based on the third author's PhD thesis~\cite{Wang2014}. She is grateful to the support during last years from the Department of Mathematics in the Faculty of Natural Sciences of Matej Bel University in Bansk\'a Bystrica.
The authors are grateful to Prof. Shao-Fei Du for his generosity to share his unpublished work in \cite{BDLNS}, and to Prof. Gareth Jones for his illuminating suggestions which have improved the presentation of the paper. This work is supported by the following grants: APVV-0223-10, and the grant APVV-ESF-EC-0009-10 within the EUROCORES Programme EUROGIGA (Project GReGAS) of the European Science Foundation and the Slovak-Chinese bilateral grant APVV-SK-CN-0009-12.



\begin{thebibliography}{99}

\bibitem{BDLNS}Y.-F. Ban, S.-F. Du, Y. Liu, R. Nedela,  M. \v{S}koviera, Classification of regular maps whose automorphism groups are $2$-groups of class three, in preparation.
\bibitem{Be1} G.V. Bely\v{\i}, Galois extensions of a maximal cyclotomic field, Izv. Akad. Nauk. SSSR Ser. Mat. 43 (1979) 267--276.
\bibitem{Be2} G.V. Bely\v{\i}, A new proof of the three-point theorem, Math. Sbornik 193 (2002) 21--24.

 \bibitem{BN2001}A. Breda d'Azevedo, R. Nedela, Join and intersection of hypermaps, Acta. Univ. M. Belii. 9 (2001) 13--28.
\bibitem{BJ2001}A. Breda d'Azeveda, G.A. Jones, Platonic hypermaps, Contributions to Algebra and Geometry 42 (1) (2001) 1--37.

\bibitem{CaSj} S. Kallel, D. Sjerve,
On the group of automorphisms of cyclic covers of the Riemann sphere.
Math. Proc. Cambridge Philos. Soc. 138 (2005), no. 2, 267–287.


\bibitem{Conder2012}M.D.E. Conder,  All proper orientable regular hypermaps on surfaces of genus 2 to 101. URL: \url{https://www.math.auckland.ac.nz/~conder/OrientableProperHypermaps101.txt}

\bibitem{CJSW} M.D.E. Conder, G.A. Jones, M. Streit, J. Wolfarth, Galois actions and regular dessins of small genera, Rev. Mat. Iberoam. 29 (2013) 163--181.

\bibitem{CDNS}M.D.E. Conder, S.-F. Du, R. Nedela, M. \v{S}koviera, Nilpotent regular maps with simple underlying graphs, in preparation.


\bibitem{CS1988}D. Corn, D. Singerman, Regular hypermaps, European J. Combin. 9 (1988) 337--351.
\bibitem{CJSW2009}
A.D. Coste, G.A. Jones, M. Streit, J. Wolfart, Generalised Fermat hypermaps and Galois orbits, Glasgow Math. J. 51 (2) (2009) 289--299.
\bibitem{Coxeter1950} H.S.M. Coxeter, Self-dual configurations and regular graphs,  Bull. Amer. Math. Soc. 56 (1950) 413--455.

\bibitem{DJ1987} M.L.N. Downs, G.A. Jones, Enumerating regular objects with a given automorphism group, Discrete Math. 64 (1987) 299--302.

\bibitem{DJ2013} M.L.N. Downs, G.A. Jones, Enumerating regular objects with Suzuki groups, arXiv:1309.5215 [math. GR].


\bibitem{DJKNS2007}S.-F Du, G.A. Jones, J.H. Kwak, R. Nedela, M. \v{S}koviera, Regular embeddings of $K_{n,n}$ where $n$ is a power of $2$. I: Metacyclic case, European J. Combin. 28 (2007) 1595--1609.
\bibitem{DJKNS2010}S.-F Du, G.A. Jones, J.H. Kwak, R. Nedela, M. \v{S}koviera, Regular embeddings of $K_{n,n}$ where $n$ is a power of $2$. II: The
non-metacyclic case, European J. Combin. 31 (2010) 1946--1956.


\bibitem{GZ2013}G. Gonz\'alez-Diez, A. Jaikin-Zapirain, The absolute Galois group acts faithfully on regular dessins
and on Beauville surfaces, preprint, 2013.

\bibitem{GoRu} R. Gonzalo, E.R. Rodr\'\i gues, Riemann surfaces and abelian variaties with an automorphism of prime order, Duke Math. J. 69 (1993), 199--217.

\bibitem{Grothendieck1997} A. Grothendieck, Esquisse d'un programme, in: L. Schneps and P. Lochak (eds.),
Geometric Galois actions 1. The inverse Galois problem, Moduli Spaces and Mapping Class Groups, London Math. Soc. Lecture Notes Ser. 242, pp. 5--48, Cambridge Univ. Press, Cambridge, 1997.

\bibitem{Hall1936}  P. Hall, The Eulerian functions of a group, Q. J. Math. 7 (1936) 134--151.


\bibitem{Hidalgo2013}  R.A. Hidalgo, The bipartite graphs of abelian dessins d'enfants,
Ars Math. Contemporanea, 6 (2013) 301--304.

\bibitem{Hu2013}K. Hu, Group Extensions and Coverings of Maps, PhD thesis, Matej Bel University, Bansk\'a Bystrica, 2013.

\bibitem{HNW2014}K. Hu, R. Nedela, N.-E. Wang, Classification of $p$-groups of class two which underly a unique regular dessin, in preparation.

\bibitem{Hua1982}L.-K. Hua, Introduction to Number Theory, Springer-Verlag, Berlin, 1982.

\bibitem{Huppert1967}B. Huppert, Endliche Gruppen (Vol. 1), Springer-Verlag, Berlin, 1967.
\bibitem{James1988} L.D. James, Operations on hypermaps, and outer automorphisms, European J. Combin. 9 (1988) 551--560.

\bibitem{Jones2010-2}G.A. Jones, Characterisations and Galois conjugacy of generalised Paley maps, J. Combin. Theory Ser. B 103 (2) (2013) 209--219.
\bibitem{Jones2009}G.A. Jones, Classification and Galois conjugacy of Hamming maps, Ars Math. Contemp. 4 (2011) 313--328.
\bibitem{Jones2010}G.A. Jones, Regular embeddings of complete bipartite graphs: classification and enumeration, Proc. Lond. Math. Soc. 101 (3) (2010) 427--453.
\bibitem{Jones2013}G.A. Jones, Regular dessins with a given automorphism group, 	arXiv:1309.5219 [math.GR].

\bibitem{Jones2012}G.A. Jones, Hypermaps and multiply quasiplatonic Riemann surfaces, European J. Combin. 33 (7) (2012) 1588--1605.
\bibitem{JP2010}G.A. Jones, D. Pinto, Hypermap operations of finite order, Discrete Math. 310 (2010) 1820--1827.
\bibitem{JNS2008}G.A. Jones, R. Nedela, M. \v{S}koviera, Complete bipartite graphs with a unique regular embedding, J. Combin. Theory Ser. B 98 (2008) 241--248.
\bibitem{JNS2007}G.A. Jones, R. Nedela, M. \v{S}koviera, Regular embeddings of $K_{n,n}$ where $n$ is an odd prime power, European J. Combin. 28 (2007) 1863--1875.


\bibitem{JS1996}G.A. Jones, D. Singerman, Bely\v{\i} functions, hypermaps and Galois groups, Bull. London Math. Soc. 28 (1996) 561--590.


\bibitem{JSW2007} G.A. Jones, M. Streit, J. Wolfarth, Galois action on families of generalised Fermat curves, J. Algebra, 307 (2007) 829--840.

\bibitem{JSW2010}G.A. Jones, M. Streit, J. Wolfart, Wilson's map operations on regular dessins and cyclotomic fields of definition, Proceedings of the London Math. Soc. 100 (2) (2010) 510--532.
\bibitem{JT1983}G.A. Jones, J.S. Thornton, Operations on maps, and outer automorphisms, J. Combin. Theory Ser. B 35 (1983) 93--103.

\bibitem{MNS2012} A. Malni\v{c}, R. Nedela, M. \v{S}koviera, Regular maps with nilpotent automorphism groups, European J. Combin. 33 (8) (2012) 1974--1986.


\bibitem{NSZ2002}R. Nedela, M. \v{S}koviera, A. Zlato\v{s}, Regular embeddings of complete bipartite graphs, Discrete Math. 258 (2002) 379--381.


\bibitem{Scott1964}W.R. Scott, Group Theory,  Dover Publication. Inc, New York, 1964.


\bibitem{Wang2014}N.-E Wang, Regular bipartite maps, PhD thesis, Matej Bel University, Bansk\'a Bystrica, 2014.


\bibitem{Wilson1979}S.E. Wilson, Operators over regular maps, Pacific J. Math. 81 (1979) 559--568.
\bibitem{Wilson1994}S.E. Wilson, Parallel products in groups and maps, J. Algebra 167 (1994) 539--546.

\bibitem{ZD2012}J.-Y. Zhang, S.-F. Du, On the orientable regular embeddings of complete multipartite graphs, European J. Combin. 33 (2012) 1303--1312.
\bibitem{ZD2014}J.-Y. Zhang, S.-F. Du, A classification of orientably regular embeddings of complete multipartite graphs, European J. Combin. 36 (2014) 437--452.








\end{thebibliography}
\end{document}